\providecommand{\tabularnewline}{\\}
\providecommand{\algorithmname}{Algorithm}
\theoremstyle{plain}
\newtheorem{thm}{\protect\theoremname}
  \theoremstyle{plain}
  \newtheorem{lem}[thm]{\protect\lemmaname}
  \theoremstyle{plain}
  \newtheorem{prop}[thm]{\protect\propositionname}
\def\vec#1{\mbox{\boldmath $#1$}}
  \providecommand{\lemmaname}{Lemma}
  \providecommand{\propositionname}{Proposition}
\providecommand{\theoremname}{Theorem}
\begin{document}

\title{OPINS: An Orthogonally Projected Implicit Null-Space Method for Singular
and Nonsingular Saddle-Point Systems\footnotemark[1]}

\author{Cao Lu\footnotemark[2]\and Tristan Delaney\footnotemark[2] \and
Xiangmin Jiao\footnotemark[2]\ \footnotemark[3]}

\maketitle
\footnotetext[1]{This work was supported by DoD-ARO under contract \#W911NF0910306. The third author is also supported by a subcontract to Stony Brook University from Argonne National Laboratory under Contract DE-AC02-06CH11357 for the SciDAC program funded by the Office of Science, Advanced Scientific Computing Research of the U.S. Department of Energy.}
\footnotetext[2]{Dept. of Applied Math. \& Stat., Stony Brook University, Stony Brook, NY 11794, USA.}
\footnotetext[3]{Corresponding author. Email: jiao@ams.sunysb.edu.}
\begin{abstract}
Saddle-point systems appear in many scientific and engineering applications.
The systems can be sparse, symmetric or nonsymmetric, and possibly
singular. In many of these applications, the number of constraints
is relatively small compared to the number of unknowns. The traditional
null-space method is inefficient for these systems, because it is
expensive to find the null space explicitly. Some alternatives, notably
constraint-preconditioned or projected Krylov methods, are relatively
efficient, but they can suffer from numerical instability and even
nonconvergence. In addition, most existing methods require the system
to be nonsingular or be reducible to a nonsingular system. In this
paper, we propose a new method, called \emph{OPINS}, for singular
and nonsingular saddle-point systems. OPINS is equivalent to the null-space
method with an orthogonal projector, without forming the orthogonal
basis of the null space explicitly. OPINS can not only solve for the
unique solution for nonsingular saddle-point problems, but also find
the minimum-norm solution in terms of the solution variables for singular
systems. The method is efficient and easy to implement using existing
Krylov solvers for singular systems. At the same time, it is more
stable than the other alternatives, such as projected Krylov methods.
We present some preconditioners to accelerate the convergence of OPINS
for nonsingular systems, and compare OPINS against some present state-of-the-art
methods for various types of singular and nonsingular systems.
\end{abstract}
\pagestyle{myheadings}
\thispagestyle{plain}
\markboth{C. LU, T. DELANEY, AND X. JIAO}{ORTHOGONALLY PROJECTED IMPLICIT NULL-SPACE METHOD}

\section{Introduction}

We consider the numerical solution of the following saddle-point system
\begin{equation}
\begin{bmatrix}\vec{A} & \vec{B}^{T}\\
\vec{B} & \vec{0}
\end{bmatrix}\begin{bmatrix}\vec{x}\\
\vec{y}
\end{bmatrix}=\begin{bmatrix}\vec{f}\\
\vec{g}
\end{bmatrix},\label{eq:KKT}
\end{equation}
where $\vec{A}\in\mathbb{R}^{n\times n}$ may be nonsymmetric, $\vec{B}\in\mathbb{R}^{m\times n}$,
\emph{$\vec{x},\vec{f}\in\mathbb{R}^{n}$,} and $\vec{y},\vec{g}\in\mathbb{R}^{m}$.
The coefficient matrix, denoted by $\vec{K}$, is in general nonsymmetric
and indefinite. It may be nonsingular or singular, and often very
ill-conditioned even when it is nonsingular. 

This type of saddle-point problems arises in many scientific applications.
For example, it can arise from solving the following constrained optimization
problem,
\begin{equation}
\min_{\vec{x}}\frac{1}{2}\vec{x}^{T}\vec{A}\vec{x}-\vec{f}^{T}\vec{x}\quad\mbox{subject to}\quad\vec{B}\vec{x}=\vec{g},\label{eq:LangrangeMultiplier}
\end{equation}
using the method of Lagrange multipliers, where $\vec{A}$ is the
Hessian of the quadratic objective function, and $\vec{B}\vec{x}=\vec{g}$
defines a constraint hyperplane for the minimization. In this case,
$\vec{A}$ is typically symmetric, $\vec{x}$ contains the \emph{optimization
variables} or \emph{solution variables}, and $\vec{y}$ contains the
\emph{Lagrange multipliers} or \emph{constraint variables}. The optimality
conditions are referred as Karush-Kuhn-Tucker conditions, and the
system (\ref{eq:KKT}) is often called the \emph{KKT system} \cite{nocedal2006numerical},
with $\vec{K}$ being the\emph{ KKT matrix}. In PDE discretization,
the Lagrange multipliers are often used to enforce nodal conditions,
such as sliding boundary conditions and continuity constraints at
hanging nodes \cite{fries2011hanging}. 

Solving the saddle-point problems is particularly challenging. One
may attempt to solve it using a general-purpose solver, such as a
direct solver or an iterative solver \cite{bramble1988preconditioning,de2005block,tuma2002note},
or the range-space method \cite{vavasis1994stable}. For large and
sparse saddle-point systems, a more powerful method is the null-space
method \cite{ANU:298722}, which solves for $\vec{x}$ first and then
$\vec{y}$, with an iterative method as its core solver. The null-space
method computes $\vec{x}$ by first finding a particular solution,
and then solving its complementary component in $\mbox{null}(\vec{B})$
(c.f. Algorithm~\ref{alg:NULLSPACE}). This method is physically
and geometrically meaningful, as it effectively finds a solution $\vec{x}_{*}$
within the constraint hyperplane such that $\vec{A}\vec{x}_{*}-\vec{f}\in\mbox{range}(\vec{B}^{T})$.
This is satisfied if and only if $\vec{f}\in\mbox{range}(\vec{A})+\mbox{range}(\vec{B}^{T})$.
Let $\vec{Z}$ denote the matrix composed of a set of basis vectors
of $\mbox{null}\left(\vec{B}\right)$, and we say $\vec{Z}$ is \emph{orthonormal
}if the basis is orthonormal. The traditional null-space method is
efficient if $\mbox{null}(\vec{B})$ is low dimensional, since it
involves a small linear system with the coefficient matrix $\vec{Z}^{T}\vec{A}\vec{Z}$.
In exact arithmetic, $\vec{Z}$ does not need to be orthonormal. However,
with rounding errors, it is desirable for $\vec{Z}$ to be orthonormal
(or nearly orthonormal), so that the condition number of $\vec{Z}^{T}\vec{A}\vec{Z}$
is close to that of $\vec{A}$. However, if the dimension of $\mbox{null}(\vec{B})$
is high, i.e., when $m\ll n$, determining such a nearly orthonormal
basis is computationally expensive, making the traditional null-space
method impractical for such applications.

Another class of methods, which we refer to as \emph{implicit null-space
methods}, are effective for large-scale saddle-point systems. Examples
of this method include the Krylov subspace methods with a constraint
preconditioner \cite{schoberl2007symmetric,gould2001solution} and
the projected Krylov methods (KSP) \cite{gould2014projected}. In
exact arithmetic, these methods are equivalent to the null-space method
with an orthonormal basis of $\mbox{null}(\vec{B})$, but they do
not require computing the basis explicitly. These methods are usually
more efficient than directly applying the Krylov subspace methods
to the whole system. However, with rounding errors, these methods
may suffer from numerical instability, due to loss of orthogonality.
The instability may be mitigated by iterative refinements \cite{gould2001solution,gould2014projected},
but there is no guarantee that the solver would not stagnate even
with iterative refinements.

In this paper, we propose an implicit orthogonal projection method,
called \emph{OPINS}, which is a more stable variant of implicit null-space
methods. Specifically, we compute an orthonormal basis for $\mbox{range}\left(\vec{B}^{T}\right)$,
which can be constructed efficiently when $m\ll n$ using a stable
algorithm such as $QR$ factorization with column pivoting (QRCP).
Let $\vec{U}$ denote the matrix composed of such an orthonormal basis.
Instead of using $\vec{Z}^{T}\vec{A}\vec{Z}$ in the null-space method,
we use the orthogonal projector $\vec{\Pi}_{\vec{U}}^{\perp}\equiv\vec{I}-\vec{U}\vec{U}^{T}$
to construct a singular but compatible linear system, with a symmetric
coefficient matrix $\vec{\Pi}_{\vec{U}}^{\perp}\vec{A}\vec{\Pi}_{\vec{U}}^{\perp}$.
We solve this system using a solver for singular systems, such as
MINRES \cite{paige1975solution,choi2011minres} and SYMMLQ \cite{paige1975solution}
for symmetric systems, and GMRES \cite{Saad86GMRES,Reichel2005BGS},
LSQR \cite{Paige92LSQR} or LSMR \cite{Fong11LSMR} for nonsymmetric
systems. We also propose preconditioners for saddle-point systems,
based on the work in \cite{gould2014projected}. The resulting OPINS
method is highly efficient when $m\ll n$, and it is stable, robust,
and easy to implement using existing Krylov-subspace method for singular
systems, without the issue of stagnation suffered by other implicit
null-space methods.

Besides being more stable, another advantage of OPINS is that it can
be applied to singular saddle-point systems, which arise in various
applications. For example, in the application of fracture mechanics,
a solid object may contain many cracks and isolated pieces, and an
elasticity model is in general under-constrained, leading to singular
saddle-point systems. In the literature, most methods assume the saddle-point
system is nonsingular. When this assumption is violated, for example
when there are redundant constraints or insufficient constraints,
these methods may force the user to add artificial boundary conditions
or soft constraints to make the system nonsingular, which unfortunately
may undermine the physical accuracy of the solution. We consider singular
saddle-point systems under the assumption that $\vec{f}\in\mbox{range}(\vec{A})+\mbox{range}(\vec{B}^{T})$,
in other words, the saddle-point system is compatible in terms of
$\vec{f}$. For problems arising from constrained minimization, such
a system finds the solution that minimizes $\Vert\vec{x}\Vert$ among
all the solutions of 
\begin{equation}
\min_{\vec{x}}\frac{1}{2}\vec{x}^{T}\vec{A}\vec{x}-\vec{f}^{T}\vec{x}\quad\mbox{subject to }\quad\min_{\vec{x}}\mbox{\ensuremath{\Vert}}\vec{g}-\vec{B}\vec{x}\Vert.\label{eq:SingularLangrangeMultiplier}
\end{equation}
Throughout this paper, the norms are in 2-norm unless otherwise noted.
We refer to the above solution as the \emph{minimum-norm solution}
of a singular saddle-point system. Geometrically, this effectively
defines a constraint hyperplane in a least-squares sense, and then
require the objective function to be minimized exactly within the
hyperplane, and it defines a unique solution under the assumption
of $\vec{f}\in\mbox{range}(\vec{A})+\mbox{range}(\vec{B}^{T})$. Conversely,
if $\vec{f}\not\in\mbox{range}(\vec{A})+\mbox{range}(\vec{B}^{T})$,
the objective function does not have a critical point in the constraint
hyperplane, so the constrained minimization is ill-posed. OPINS delivers
a more accurate and systematic method to find the minimum-norm solution
for such singular saddle-point problems.

The remainder of the paper is organized as follows. Section~\ref{sec:Background}
reviews some background and related work. Section~\ref{sec:OPINS}
introduces the Orthogonally Projected Implicit Null-Space Method (OPINS)
for singular and nonsingular saddle-point systems. Section~\ref{sec:Preconditioners}
introduces effective preconditioners for OPINS. Section~\ref{sec:Results}
presents some numerical results with OPINS as well as comparisons
against some state-of-the-art methods. Section~\ref{sec:Conclusions}
concludes the paper with a discussion on future research directions.

\section{\label{sec:Background}Background}

We briefly review some important concepts and related methods for
solving saddle-point problems. There is a large body of literature
on saddle-point problems; see \cite{ANU:298722} for a comprehensive
survey up to early 2000s and the references in \cite{gould2014projected}
for more recent works. We focus our discussions on the null-space
methods, since they are the most relevant to our proposed method.
For completeness, we will also briefly discuss some other methods
for saddle-point systems and singular linear systems.

\subsection{Explicit Null-Space Methods}

One of the most powerful methods for solving saddle-point systems
is the null-space method. Geometrically, for a nonsingular saddle-point
problem arising from constrained minimization, this method finds a
critical point of an objective function within a constraint hyperplane
$\vec{B}\vec{x}=\vec{g}$. Algorithm~\ref{alg:NULLSPACE} outlines
the algorithm, where $\vec{x}_{p}\in\mathbb{R}^{n}$ denotes a particular
solution within the constraint hyperplane. Let $q$ denote the rank
of the constraint matrix $\vec{B}$, and $q\leq m$. The column vectors
of the $\vec{Z}\in\mathbb{R}^{n\times(n-q)}$ form a basis of $\mbox{null}(\vec{B})$,
and $\vec{B}\vec{Z}=\vec{0}$. Step 3 finds a component $\vec{x}_{n}$
in $\mbox{null}(\vec{B})$, i.e. a vector tangent to the constraint
hyperplane, so that $\vec{x}_{n}+\vec{x}_{p}$ is at a critical point.
After determining $\vec{x}$, the final step finds the Lagrange multipliers
$\vec{y}$ in $\mbox{range}(\vec{B})$. 

\begin{algorithm}
\protect\caption{\label{alg:NULLSPACE}Null-Space Method}

\textbf{input}: $\vec{A}$, $\vec{B}$, $\vec{f}$, $\vec{g}$, tolerance
for the iterative solver (if used)

\textbf{output}: $\vec{x}$, $\vec{y}$ (optional)

\begin{algorithmic}[1]

\STATE solve $\vec{B}\vec{x}_{p}=\vec{g}$

\STATE compute $\vec{Z}$, composed of basis vectors of $\mbox{null}(\vec{B})$

\STATE \label{-solve null-space}solve $\vec{Z}^{T}\vec{A}\vec{Z}\vec{v}=\vec{Z}^{T}\left(\vec{f}-\vec{A}\vec{x}_{p}\right)$

\STATE $\vec{x}_{n}\leftarrow\vec{Z}\vec{v}$ and $\vec{x}\leftarrow\vec{x}_{p}+\vec{x}_{n}$

\STATE $\vec{y}\leftarrow\vec{B}^{+T}(\vec{f}-\vec{A}\vec{x})$

\end{algorithmic}
\end{algorithm}

In the algorithm, step 3 is the most critical, which solves the equation
\begin{equation}
\vec{Z}^{T}\vec{A}\vec{Z}\vec{v}=\vec{Z}^{T}\left(\vec{f}-\vec{A}\vec{x}_{p}\right),\label{eq:NullSpaceEq}
\end{equation}
within $\mbox{null}(\vec{B})$. We refer to (\ref{eq:NullSpaceEq})
as the \emph{null-space equation}, and denote its coefficient matrix
by $\hat{\vec{N}}$. This matrix is $(n-q)\times(n-q)$, which is
smaller than the original matrix $\vec{K}$. When $\vec{A}$ is symmetric
and positive semidefinite and $\mbox{null}\left(\vec{A}\right)\cap\mbox{null}\left(\vec{B}\right)=\left\{ \vec{0}\right\} $,
$\hat{\vec{N}}$ is SPD \cite{ANU:298722}, and (\ref{eq:NullSpaceEq})
can be solved efficiently using preconditioned conjugate gradient
(CG). However, if $\hat{\vec{N}}$ is symmetric but indefinite or
is nonsymmetric, then an alternative iterative solver, such MINRES,
SYMMLQ \cite{paige1975solution}, or GMRES \cite{Saad86GMRES}, can
be used; see textbooks such as \cite{Saad03IMS} for details of these
iterative solvers.

In the traditional null-space method, the matrix $\vec{Z}$ is constructed
explicitly. We refer to such an approach as the \emph{explicit null-space
method}. In exact arithmetic, it is not necessary for $\vec{Z}$ to
be orthonormal. If $\vec{B}$ has full rank, then there exists an
$n\times n$ permutation matrix $\vec{P}$ such as $\vec{B}\vec{P}=\left[\vec{B}_{1}\mid\vec{B}_{2}\right]$,
where $\vec{B}_{1}$ is an $m\times m$ nonsingular matrix, and $\vec{B}_{2}$
is $m\times(n-m)$. Then, one could simply choose $\vec{Z}$ to be
\cite{gilbert1987computing} 
\[
\vec{Z}=\vec{P}\left[\begin{array}{c}
-\vec{B}_{1}^{-1}\vec{B}_{2}\\
\vec{I}
\end{array}\right],
\]
where $\vec{I}$ is the $(n-m)\times(n-m)$ identity matrix. However,
if the columns of $\vec{Z}$ are too far from being orthonormal, $\vec{Z}^{T}\vec{A}\vec{Z}$
may be ill-conditioned, which in turn can cause slow convergence for
iterative solvers or large errors in the resulting solution. Therefore,
it is desirable for $\vec{Z}$ to be orthonormal or nearly orthonormal,
so that $\vec{Z}^{T}\vec{A}\vec{Z}$ (approximately) preserves the
condition number of $\vec{A}$. If the dimension of $\mbox{null}(\vec{B})$
is high, i.e., when $m\ll n$, $\vec{Z}$ is typically quite dense.
Determining an orthonormal $\vec{Z}$ requires the full QR factorization
of $\vec{B}^{T}$, which takes $\mathcal{O}(n^{3})$ operations. Therefore,
explicit null-space methods are impractical for large-scale applications.

\subsection{Implicit Null-Space Methods}

For saddle-point systems where $m\ll n$, it is desirable to avoid
constructing an orthonormal basis of $\mbox{null}(\vec{B}$) explicitly.
One such approach is to use a Krylov subspace method with a \emph{constraint
preconditioner} \cite{gould2001solution,keller2000constraint}, which
has the form
\begin{equation}
\vec{M}=\left[\begin{array}{cc}
\vec{G} & \vec{B}^{T}\\
\vec{B} & \vec{0}
\end{array}\right],\label{eq:constraint_prec}
\end{equation}
where $\vec{G}$ is an approximation of $\vec{A}$. This preconditioner
is indefinite, and it was shown in \cite{schoberl2007symmetric} that
it provides optimal bounds for the maximum eigenvalues among similar
indefinite preconditioners. Since the preconditioner is indefinite,
it is not obvious whether we can use it as a preconditioner for methods
such as CG or MINRES, which typically require symmetric positive-definite
preconditioners. As shown in \cite{gould2001solution,gould2014projected},
one can apply the preconditioner to the modified saddle-point system
\begin{equation}
\begin{bmatrix}\vec{A} & \vec{B}^{T}\\
\vec{B} & \vec{0}
\end{bmatrix}\begin{bmatrix}\vec{x}_{n}\\
\vec{y}
\end{bmatrix}=\begin{bmatrix}\vec{f}-\vec{A}\vec{x}_{p}\\
\vec{0}
\end{bmatrix},\label{eq:PMINRES}
\end{equation}
with preconditioned CG and MINRES. The vector $\vec{x}_{n}$ in (\ref{eq:PMINRES})
is the same as that from step 4 in Algorithm~\ref{alg:NULLSPACE},
but the vector $\vec{y}$ computed from (\ref{eq:PMINRES}) may be
inaccurate, so one may need to solve for $\vec{y}$ separately once
$\vec{x}$ is obtained. In exact arithmetic, the Krylov subspace method
with this constraint preconditioner is equivalent to the \emph{projected
Krylov method} in \cite{gould2014projected}. Because of this equivalence,
we will use the names \emph{projected Krylov methods} and \emph{Krylov
subspace methods with constraint preconditioning} interchangeably
in this paper, although these methods somewhat differ in their implementation
details.

The projected Krylov method is closely related to the preconditioned
null-space method on the null-space equation (\ref{eq:NullSpaceEq})
with an orthonormal $\vec{Z}$. The stability of the method requires
the computed $\vec{x}_{n}$ to be exactly in $\mbox{null}(\vec{B})$
at each step. However, the rounding errors can quickly introduce a
nonnegligible component in $\mbox{range}(\vec{B}^{T})$, which can
cause the method to break down \cite{gould2001solution,gould2014projected}.
To mitigate this issue, the constraint preconditioner must be applied
``exactly,'' by solving the preconditioned system with a direct
method followed by one or more steps of iterative refinement per iteration
\cite{gould2001solution,gould2014projected}. The iterative refinement
introduces extra cost, and there is no guarantee that it would recover
orthogonality between $\vec{x}_{n}$ and $\mbox{range}(\vec{B})$
to machine precision, so the projected Krylov method may still stagnate.
Besides its potential instability, the projected Krylov methods typically
assume the KKT system is nonsingular \cite{gould2014projected}. It
is desirable to develop a more stable version of the implicit null-space
method that can also be applied to singular KKT systems. The OPINS
method proposed in this paper achieves this goal.

\subsection{Other Methods for Saddle-Point Systems}

Besides the null-space methods, another class of methods for saddle-point
problems is the range-space method \cite{vavasis1994stable}. It first
obtains $\vec{y}$ by solving the system
\begin{equation}
\left(\vec{B}\vec{A}^{-1}\vec{B}^{T}\right)\vec{y}=\vec{B}\vec{A}^{-1}\vec{f}-\vec{g},\label{eq:RangeSpace1}
\end{equation}
where the coefficient matrix is the Schur complement, and then computes
$\vec{x}$ by solving
\begin{equation}
\vec{A}\vec{x}=\vec{f}-\vec{B}^{T}\vec{y}.\label{eq:RangeSpace2}
\end{equation}
The range-space method can be attractive if a factorization of $\vec{A}$
is available. However, computing the Schur complement is expensive
if $\vec{A}$ is large and sparse, and the method is not directly
applicable if $\vec{A}$ is singular.

The null-space and range-space methods both leverage the special structures
of the saddle-point systems. In some cases, one may attempt to solve
the whole system (\ref{eq:KKT}) directly using a factorization-based
method, such as $LDL^{T}$ decomposition for symmetric systems \cite{tuma2002note}.
These methods are prohibitively expensive for large-scale problems.
In addition, since the solution variables $\vec{x}$ and constraint
variables $\vec{y}$ have different physical meanings, the entries
in $\vec{A}$ and $\vec{B}$ may have very different scales. As a
result, $\vec{K}$ may be arbitrarily ill-conditioned, and these methods
may break down or produce inaccurate solutions due to poor scaling.
Another class of method is iterative solvers with preconditioners
\cite{Benzi08SPT}. The constraint preconditioner is a special case,
which is equivalent to an implicit null-space method, with some optimality
properties among similar indefinite preconditioners \cite{bank1989class,schoberl2007symmetric}.
Some other preconditioners include the block diagonal \cite{de2005block},
block triangular \cite{bramble1988preconditioning}, and multigrid
\cite{adams2004algebraic}. These methods do not distinguish between
$\vec{x}$ and $\vec{y}$, so they tend to have more difficulties
when $\vec{K}$ is singular or ill-conditioned.

\subsection{Methods for Singular Saddle-Point Systems}

The methods we discussed above typically assume nonsingular saddle-point
systems. For general singular saddle-point problems, one may resort
to truncated SVD \cite{Golub13MC} or rank-revealing QR, which are
computationally expensive. One may also apply an iterative solver
for singular systems, such as MINRES and SYMMLQ \cite{paige1975solution}
for compatible symmetric systems, MINRES-QLP \cite{choi2011minres}
for incompatible symmetric systems, Breakdown-free GMRES \cite{Reichel2005BGS},
LSQR \cite{Paige92LSQR} and LSMR \cite{Fong11LSMR} for nonsymmetric
systems, and GMRES \cite{Saad86GMRES} for compatible nonsymmetric
systems with $\mbox{range}(\vec{K})=\mbox{range}(\vec{K}^{T})$. Without
preconditioners, except for Breakdown-free GMRES \cite{Reichel2005BGS},
these methods can find the minimum-norm solution of compatible singular
systems when they are applicable. However, these methods do not distinguish
between $\vec{x}$ and $\vec{y}$ in the solver. As a result, these
methods minimize $\Vert\vec{x}\Vert^{2}+\Vert\vec{y}\Vert^{2}$, which
may substantially differ from the minimum-norm solution of $\vec{x}$
as defined in (\ref{eq:SingularLangrangeMultiplier}), especially
when $\vec{K}$ is ill-conditioned. OPINS will overcome this issue
by leveraging the iterative solvers in the implicit null-space method
in a stable and efficient fashion, and in turn offer a new effective
method for solving singular saddle-point systems.

\section{\label{sec:OPINS}Proposed Method}

In this section, we introduce the \emph{Orthogonally Projected Implicit
Null-Space Method}, or \emph{OPINS}, for solving saddle-point systems
(\ref{eq:KKT}). Similar to the other implicit null-space methods,
OPINS does not require the explicit construction of the basis of $\mbox{null}(\vec{B})$,
and it is particularly effective when $m\ll n$. However, unlike previous
null-space methods, OPINS enforces orthogonality explicitly and hence
enjoys better stability. It is also applicable to singular saddle-point
systems. In the following subsections, we present the OPINS method,
its derivation, and the analysis of its cost and stability.

\subsection{Algorithm Description}

A core idea of OPINS is to use the orthogonal projector onto $\mbox{null}(\vec{B})$,
constructed from an orthonormal basis of $\mbox{range}(\vec{B}^{T})$.
Let $\vec{U}$ denote the matrix composed of an orthonormal basis
of $\mbox{range}(\vec{B}^{T})$, which can be computed using truncated
SVD or QR with column pivoting (QRCP) \cite{chan1987rank,Golub13MC}.
Since $\vec{U}$ is orthonormal, $\vec{\Pi}_{\vec{U}}\equiv\vec{U}\vec{U}^{T}$
is the unique orthogonal projector onto $\mbox{range}(\vec{B}^{T})$,
and 
\[
\vec{\Pi}_{\vec{U}}^{\perp}\equiv\vec{I}-\vec{\Pi}_{\vec{U}}=\vec{I}-\vec{U}\vec{U}^{T}
\]
is its complementary orthogonal projector onto $\mbox{null}(\vec{B})$.
Note that $\vec{\Pi}_{\vec{U}}^{\perp}=\vec{\Pi}_{\vec{Z}}$, where
$\vec{Z}$ is composed of an orthonormal basis of $\mbox{null}(\vec{B})$.
Let $q=\mbox{rank}(\vec{B})$. Using this projector, OPINS replaces
the $(n-q)\times(n-q)$ null-space equation (\ref{eq:NullSpaceEq})
in the null space method with the $n\times n$ singular system 
\begin{equation}
\vec{\Pi}_{\vec{U}}^{\perp}\vec{A}\vec{\Pi}_{\vec{U}}^{\perp}\vec{w}=\vec{\Pi}_{\vec{U}}^{\perp}\left(\vec{f}-\vec{A}\vec{x}_{p}\right),\label{eq:OPINSNullSpaceEq}
\end{equation}
and then solves it using a solver for singular systems. We refer to
the above equation as the \emph{projected null-space (PNS) equation},
and denote its coefficient matrix as $\vec{N}$.

Algorithm~\ref{alg:OPINS} outlines the complete OPINS algorithm,
which applies to nonsingular or compatible singular systems. The first
two steps find an orthonormal basis of $\vec{B}^{T}$ using QRCP,
where $\vec{P}$ is an $m\times m$ permutation matrix, so that the
diagonal values of $\vec{R}$ are sorted in descending order. $\vec{Q}\in\mathbb{R}^{n\times m}$
is orthonormal, and $\vec{R}\in\mathbb{R}^{m\times m}$ is upper triangular.
For stability, QRCP should be computed based on Householder QR factorization
\cite{Golub13MC}. If $\vec{B}$ is rank deficient, its rank can be
estimated from the magnitude of the diagonal entries in $\vec{R}$,
or more robustly using a condition-number estimator \cite{Golub13MC}.
The first $q$ columns of $\vec{Q}$ form an orthonormal basis of
$\mbox{range}\left(\vec{B}^{T}\right)$. With QRCP, both $\vec{x}_{p}$
in step 3 and $\vec{y}$ in step 6 can also be solved efficiently.
When $\vec{B}$ is rank deficient, so is $\vec{R}$. We use $\vec{R}_{1:q,1:q}^{-T}$
and $\vec{R}_{1:q,1:q}^{-1}$ to denote the forward and back substitutions
on $\vec{R}_{1:q,1:q}$. 

\begin{algorithm}
\protect\caption{\label{alg:OPINS}OPINS: Orthogonally Projected Implicit Null-Space
Method}

\textbf{input}: $\vec{A}$, $\vec{B}$, $\vec{f}$, $\vec{g}$, tolerances
for rank estimation and iterative solver

\textbf{output}: $\vec{x}$, $\vec{y}$ (optional)

\begin{algorithmic}[1]

\STATE $\vec{B}^{T}\vec{P}=\vec{Q}\vec{R}$ \COMMENT{QR factorization
with column pivoting}

\STATE $\vec{U}\leftarrow\vec{Q}_{1:q}$, where $q$ is $\mbox{rank}(\vec{B})$
estimated from QRCP

\STATE $\vec{x}_{p}\leftarrow\vec{B}^{+}\vec{g}=\vec{U}\vec{R}_{1:q,1:q}^{-T}\left(\vec{P}^{T}\vec{g}\right)_{1:q}$

\STATE solve $\vec{\Pi}_{\vec{U}}^{\perp}\vec{A}\vec{\Pi}_{\vec{U}}^{\perp}\vec{w}=\vec{\Pi}_{\vec{U}}^{\perp}\left(\vec{f}-\vec{A}\vec{x}_{p}\right)$
using iterative singular solver

\STATE $\vec{x}\leftarrow\vec{x}_{p}+\vec{x}_{n}$, where $\vec{x}_{n}=\vec{\Pi}_{\vec{U}}^{\perp}\vec{w}$

\STATE $\vec{y}\leftarrow\vec{B}^{+T}\left(\vec{f}-\vec{A}\vec{x}\right)=\vec{P}_{:,1:q}\vec{R}_{1:q,1:q}^{-1}\vec{U}^{T}\left(\vec{f}-\vec{A}\vec{x}\right)$

\end{algorithmic}
\end{algorithm}

The key step of the algorithm is the solution of the PNS equation
(\ref{eq:OPINSNullSpaceEq}) in step 4, which is singular. As we will
show in Section~\ref{sub:derivation}, (\ref{eq:OPINSNullSpaceEq})
is compatible under the assumption that (\ref{eq:KKT}) is compatible
in terms of $\vec{f}$, i.e., $\vec{f}\in\mbox{range}(\vec{A})+\mbox{range}(\vec{B}^{T})$.
Therefore, we can solve it using a Krylov subspace method for compatible
singular systems, as we discuss in more detail in Subsection~\ref{sub:solution-OPNS}.
A key operation in these methods is the multiplication of the coefficient
matrix with a vector. For the multiplication with $\vec{\Pi}_{\vec{U}}^{\perp}$
with any vector $\vec{v}\in\mathbb{R}^{n}$, 
\[
\vec{\Pi}_{\vec{U}}^{\perp}\vec{v}=\vec{v}-\vec{U}\left(\vec{U}^{T}\vec{v}\right),
\]
which can be computed stably and efficiently. Note that $\vec{U}$
is not stored explicitly either, but as a collection of Householder
reflection vectors in QRCP.

\subsection{\label{sub:derivation}Detailed Derivation of OPINS}

The derivation of OPINS is similar to that of the null-space method.
For completeness, we will start with the derivation of the explicit
null-space method, and then extend it to derive OPINS. We will also
discuss the solution techniques of the PNS equation.

\subsubsection{Null-Space Method for Singular and Nonsingular Systems}

The null-space method is typically derived algebraically for nonsingular
saddle-point systems. Since we also consider singular systems that
may be partially incompatible, it is instructive to consider an alternative
but equivalent derivation, which has clear geometric meanings for
singular saddle-point problems in the context of constraint minimization
(\ref{eq:SingularLangrangeMultiplier}). 

Suppose (\ref{eq:KKT}) is a saddle-point system compatible in terms
of $\vec{f}$, i.e., $\vec{f}\in\mbox{range}(\vec{A})+\mbox{range}(\vec{B}^{T})$.
Then, there exists $\vec{x}_{*}\in\mathbb{R}^{n}$ and $\vec{y}_{*}\in\mathbb{R}^{m}$,
such that 
\begin{equation}
\vec{A}\vec{x}_{*}-\vec{f}=-\vec{B}^{T}\vec{y}_{*}\mbox{ subject to }\min_{\vec{x}_{*}}\Vert\vec{g}-\vec{B}\vec{x}_{*}\Vert.\label{eq:KKT_general}
\end{equation}
The constraint defines a hyperplane, in the least squares sense, whose
tangent space is orthogonal to $\vec{B}^{T}$. The vector $\vec{Ax}_{*}-\vec{f}$
corresponds to the gradient of a nonlinear objective function. Equation~(\ref{eq:KKT_general})
indicates that $\vec{A}\vec{x}_{*}-\vec{f}$ is a linear combination
of the column vectors of $\vec{B}^{T}$ with the Lagrange multipliers
$\vec{y}_{*}$ as the coefficients. In other words, $\vec{f}-\vec{A}\vec{x}_{*}$
is orthogonal to the constraint hyperplane. Hence, we can rewrite
(\ref{eq:KKT_general}) as $\vec{Z}^{T}(\vec{A}\vec{x}_{*}-\vec{f})=\vec{0}$,
where $\vec{Z}$ is a basis of $\mbox{null}(\vec{B})$. The saddle-point
system is then equivalent to 
\[
\vec{Z}^{T}\vec{A}\vec{x}_{*}=\vec{Z}^{T}\vec{f}\mbox{ subject to }\min_{\vec{x}_{*}}\Vert\vec{g}-\vec{B}\vec{x}_{*}\Vert,
\]
which is an alternative statement of the constraint minimization.

Let $\vec{x}_{*}=\vec{x}_{n}+\vec{x}_{p}$, where $\vec{x}_{p}\in\mbox{range}(\vec{B}^{T})$
is a point on the constraint hyperplane. There exists $\vec{x}_{n}=\vec{Z}\vec{v}\in\mbox{null}(\vec{B})$,
i.e. a vector tangent to the constraint hyperplane, such that 
\[
\vec{Z}^{T}\vec{A}(\vec{x}_{n}+\vec{x}_{p})=\vec{Z}^{T}\vec{f},
\]
or equivalently, 
\[
\vec{Z}^{T}\vec{A}\vec{Z}\vec{v}=\vec{Z}^{T}\left(\vec{f}-\vec{A}\vec{x_{p}}\right).
\]
This is the null-space equation (\ref{eq:NullSpaceEq}). If $\vec{f}\in\mbox{range}(\vec{A})+\mbox{range}(\vec{B}^{T})$,
then the null-space equation (\ref{eq:NullSpaceEq}) is compatible.

In the above derivation, $\vec{Z}$ does not need to be orthonormal.
However, if $\vec{Z}$ is far from being orthonormal, the system (\ref{eq:NullSpaceEq})
may have a large condition number, which may affect the convergence
of the iterative solvers and the accuracy of the numerical solution.

\subsubsection{OPINS for Singular and Nonsingular Systems}

To derive OPINS, now assume $\vec{Z}$ is orthonormal. We further
multiply $\vec{Z}$ to both sides of the null-space equation and then
obtain 
\begin{equation}
\vec{Z}\vec{Z}^{T}\vec{A}\vec{Z}\vec{v}=\vec{Z}\vec{Z}^{T}\left(\vec{f}-\vec{A}\vec{x}_{p}\right).\label{eq:OPINSNullSpaceEq2}
\end{equation}
In addition, we rewrite $\vec{x}_{n}=\vec{Z}\vec{v}$ as the orthogonal
projection of a vector $\vec{w}\in\mathbb{R}^{n}$ onto $\mbox{null}\left(\vec{B}\right)$,
i.e., 
\[
\vec{Z}\vec{v}=\vec{Z}\vec{Z}^{T}\vec{w}.
\]
Substituting it into (\ref{eq:OPINSNullSpaceEq2}), we have
\begin{equation}
\vec{Z}\vec{Z}^{T}\vec{A}\vec{Z}\vec{Z}^{T}\vec{w}=\vec{Z}\vec{Z}^{T}\left(\vec{f}-\vec{A}\vec{x}_{p}\right),\label{eq:OPINS_ALT}
\end{equation}
which is equivalent to (\ref{eq:OPINSNullSpaceEq}) in step 4 of Algorithm~\ref{alg:OPINS}. 

The above transformation may seem counterintuitive, as we intentionally
constructed a singular system (\ref{eq:OPINSNullSpaceEq}), which
is larger than the null-space equation (\ref{eq:NullSpaceEq}). However,
(\ref{eq:OPINSNullSpaceEq}) has three key properties: First, it uses
an orthogonal projector to ensure that $\vec{x}_{n}$ is exactly in
$\mbox{null}(\vec{B})$, and hence it overcomes the instability associated
with the projected Krylov methods. Second, since $\vec{\Pi}_{\vec{Z}}=\vec{Z}\vec{Z}^{T}=\vec{I}-\vec{U}\vec{U}^{T}=\vec{\Pi}_{\vec{U}}^{\perp}$,
we can compute the projection by finding $\vec{U}$, which is much
more efficient than finding $\vec{Z}$ when $q\leq m\ll n$. Third,
since (\ref{eq:OPINSNullSpaceEq}) is always singular, and QRCP in
step 1 supports rank-deficient $\vec{B}$, we can apply OPINS to a
saddle-point system regardless of whether $\vec{A}$, $\vec{B}$,
or $\vec{Z}^{T}\vec{A}\vec{Z}$ is singular.

Although the PNS system is singular, it is in fact compatible, due
to the following property.
\begin{lem}
\label{lem:compatability}If the saddle-point system (\ref{eq:KKT})
is compatible in terms of $\vec{f}$, i.e., $\vec{f}\in\mbox{range}(\vec{A})+\mbox{range}(\vec{B}^{T})$,
then the null-space equation (\ref{eq:NullSpaceEq}) and the PNS equation
(\ref{eq:OPINSNullSpaceEq}) are both compatible.
\end{lem}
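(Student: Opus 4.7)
The plan is to verify, for each of the two equations, that the right-hand side lies in the range of its coefficient matrix. I would structure the argument in three steps: a reduction, a simplification using the hypothesis, and the construction of an explicit solution.

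First I would reduce PNS compatibility to null-space equation compatibility. Using the reformulation~(\ref{eq:OPINS_ALT}), the PNS equation reads
\[
\vec{Z}(\vec{Z}^{T}\vec{A}\vec{Z})\vec{Z}^{T}\vec{w} \;=\; \vec{Z}\vec{Z}^{T}(\vec{f}-\vec{A}\vec{x}_{p}).
\]
Because $\vec{Z}$ has orthonormal columns (hence full column rank, with $\vec{Z}^{T}\vec{Z}=\vec{I}$), one checks directly that a vector of the form $\vec{Z}\vec{c}$ belongs to $\mathrm{range}(\vec{Z}(\vec{Z}^{T}\vec{A}\vec{Z})\vec{Z}^{T})$ exactly when $\vec{c}\in\mathrm{range}(\vec{Z}^{T}\vec{A}\vec{Z})$. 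Applying this with $\vec{c}=\vec{Z}^{T}(\vec{f}-\vec{A}\vec{x}_{p})$ shows the two compatibility statements are equivalent, so it suffices to prove compatibility of~(\ref{eq:NullSpaceEq}).

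Next I would unpack the hypothesis: pick $\vec{u}\in\mathbb{R}^{n}$ and $\vec{w}\in\mathbb{R}^{m}$ with $\vec{f}=\vec{A}\vec{u}+\vec{B}^{T}\vec{w}$. Since $\vec{B}\vec{Z}=\vec{0}$ gives $\vec{Z}^{T}\vec{B}^{T}=\vec{0}$, the right-hand side of~(\ref{eq:NullSpaceEq}) simplifies to
\[
\vec{Z}^{T}(\vec{f}-\vec{A}\vec{x}_{p}) \;=\; \vec{Z}^{T}\vec{A}(\vec{u}-\vec{x}_{p}).
\]
The clean case is when $\vec{u}-\vec{x}_{p}\in\mathrm{null}(\vec{B})=\mathrm{range}(\vec{Z})$: writing $\vec{u}-\vec{x}_{p}=\vec{Z}\vec{v}_{0}$ makes the right-hand side equal to $\vec{Z}^{T}\vec{A}\vec{Z}\vec{v}_{0}$, so $\vec{v}=\vec{v}_{0}$ is an explicit solution.

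The main obstacle is arranging such a $\vec{u}$. The decomposition $\vec{f}=\vec{A}\vec{u}+\vec{B}^{T}\vec{w}$ is not unique: one may shift $\vec{u}$ by any element of $\mathrm{null}(\vec{A})$, and any $\vec{b}$ with $\vec{A}\vec{b}\in\mathrm{range}(\vec{B}^{T})$ can be reabsorbed into $\vec{w}$. I would exploit this freedom to kill the component of $\vec{u}-\vec{x}_{p}$ in $\mathrm{range}(\vec{B}^{T})$. Concretely, I would write the orthogonal decomposition $\vec{u}-\vec{x}_{p}=\vec{Z}\vec{\alpha}+\vec{U}\vec{\beta}$, so that $\vec{Z}^{T}\vec{A}(\vec{u}-\vec{x}_{p})=\vec{Z}^{T}\vec{A}\vec{Z}\vec{\alpha}+\vec{Z}^{T}\vec{A}\vec{U}\vec{\beta}$, with the first summand manifestly in $\mathrm{range}(\vec{Z}^{T}\vec{A}\vec{Z})$; the crux is to show the residual $\vec{Z}^{T}\vec{A}\vec{U}\vec{\beta}$ also lies in $\mathrm{range}(\vec{Z}^{T}\vec{A}\vec{Z})$, using the freedom in the representation of $\vec{f}$ together with the fact that $\vec{x}_{p}=\vec{B}^{+}\vec{g}$ is consistent with $\vec{B}\vec{u}$ up to $\mathrm{null}(\vec{B})$. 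This last verification is the delicate point of the proof.
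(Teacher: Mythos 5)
Your reduction of PNS compatibility to null-space--equation compatibility (via left-multiplication by $\vec{Z}^{T}$ and the fact that $\vec{Z}\vec{c}\in\mathrm{range}(\vec{Z}\hat{\vec{N}}\vec{Z}^{T})$ iff $\vec{c}\in\mathrm{range}(\hat{\vec{N}})$) is correct, as is your simplification of the right-hand side to $\vec{Z}^{T}\vec{A}(\vec{u}-\vec{x}_{p})$ using $\vec{Z}^{T}\vec{B}^{T}=\vec{0}$. In this respect you are more explicit than the paper, whose entire ``proof'' is a pointer back to the geometric derivation. But your argument stops at the one step that actually carries the weight of the lemma: you observe that the crux is to place $\vec{Z}^{T}\vec{A}\vec{U}\vec{\beta}$ in $\mathrm{range}(\vec{Z}^{T}\vec{A}\vec{Z})$, appeal vaguely to ``freedom in the representation of $\vec{f}$,'' and leave the verification open. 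That is a genuine gap, not a deferrable detail.

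Moreover, the gap cannot be closed under the stated hypothesis alone: the lemma as written is false for a general $\vec{A}$. Take $n=2$, $m=1$, $\vec{A}=\left[\begin{smallmatrix}0&1\\1&0\end{smallmatrix}\right]$ (symmetric, nonsingular, indefinite), $\vec{B}=[1,\ 0]$, $\vec{f}=[0,\ 1]^{T}$, $\vec{g}=0$. Then $\mathrm{range}(\vec{A})+\mathrm{range}(\vec{B}^{T})=\mathbb{R}^{2}$ so the hypothesis holds trivially, yet with $\vec{Z}=[0,\ 1]^{T}$ and $\vec{x}_{p}=\vec{0}$ one finds $\vec{Z}^{T}\vec{A}\vec{Z}=0$ while $\vec{Z}^{T}(\vec{f}-\vec{A}\vec{x}_{p})=1$, so the null-space equation $0\cdot v=1$ is incompatible. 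The freedom you invoke in choosing $\vec{u}$ in $\vec{f}=\vec{A}\vec{u}+\vec{B}^{T}\vec{w}$ does not rescue this: since $\vec{A}$ is invertible, $\vec{u}$ is determined modulo $\vec{A}^{-1}\mathrm{range}(\vec{B}^{T})$, which here meets $\mathrm{null}(\vec{B})$ only in $\{\vec{0}\}$, so the $\vec{U}\vec{\beta}$ component of $\vec{u}-\vec{x}_{p}$ cannot be eliminated. The paper's derivation has the same defect: it simply asserts that $\vec{f}\in\mathrm{range}(\vec{A})+\mathrm{range}(\vec{B}^{T})$ produces a critical point $\vec{x}_{*}$ inside the constraint hyperplane, which is precisely what is to be proved.

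What does make the lemma go through is an implicit extra hypothesis that your approach could have surfaced: if $\vec{A}$ is symmetric positive semi-definite, then so is $\hat{\vec{N}}=\vec{Z}^{T}\vec{A}\vec{Z}$, and $\mathrm{null}(\hat{\vec{N}})=\{\vec{v}:\vec{A}\vec{Z}\vec{v}=\vec{0}\}$. For any $\vec{\xi}\in\mathbb{R}^{n}$ and any such $\vec{v}$,
\[
\langle\vec{Z}^{T}\vec{A}\vec{\xi},\,\vec{v}\rangle=\langle\vec{\xi},\,\vec{A}\vec{Z}\vec{v}\rangle=0,
\]
so $\vec{Z}^{T}\vec{A}\vec{\xi}\in\mathrm{null}(\hat{\vec{N}})^{\perp}=\mathrm{range}(\hat{\vec{N}})$. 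Applying this with $\vec{\xi}=\vec{u}-\vec{x}_{p}$ closes the gap immediately, with no need for the orthogonal decomposition of $\vec{u}-\vec{x}_{p}$ or any manipulation of the representation of $\vec{f}$. That orthogonality computation is the missing idea; without it (or some replacement hypothesis on $\vec{A}$), the proposal does not prove the lemma, and neither does the paper.
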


The proof of this lemma directly follows from the derivations above.
Furthermore, if the null-space equation (\ref{eq:NullSpaceEq}) is
nonsingular, we have the following lemma.
\begin{lem}
\label{lem:gmres}If the null-space equation (\ref{eq:NullSpaceEq})
is nonsingular, then the coefficient matrix of the PNS system (\ref{eq:OPINSNullSpaceEq})
has the same row and column spaces, i.e., $\mbox{range}(\vec{N})=\mbox{range}(\vec{N}^{T})$.
\end{lem}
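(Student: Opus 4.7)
The plan is to exploit the factored form $\vec{N} = \vec{Z}\hat{\vec{N}}\vec{Z}^T$, which follows from the identity $\vec{\Pi}_{\vec{U}}^{\perp} = \vec{Z}\vec{Z}^T$ for any orthonormal basis $\vec{Z}$ of $\mathrm{null}(\vec{B})$ (proved earlier in the derivation of OPINS). Once $\vec{N}$ is written in this three-factor form, its range and the range of its transpose can each be computed directly by chasing the factors, and they will both turn out to equal $\mathrm{range}(\vec{Z})$.

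First, I would recall that $\vec{\Pi}_{\vec{U}}^{\perp} = \vec{\Pi}_{\vec{Z}} = \vec{Z}\vec{Z}^T$ with $\vec{Z} \in \mathbb{R}^{n\times(n-q)}$ having orthonormal columns, so that $\vec{Z}^T\vec{Z} = \vec{I}_{n-q}$ and in particular $\vec{Z}^T:\mathbb{R}^n\to\mathbb{R}^{n-q}$ is surjective while $\vec{Z}:\mathbb{R}^{n-q}\to\mathbb{R}^n$ is injective with image $\mathrm{null}(\vec{B})$. Substituting into the definition of the PNS coefficient matrix gives
\[
\vec{N} \;=\; \vec{\Pi}_{\vec{U}}^{\perp}\vec{A}\vec{\Pi}_{\vec{U}}^{\perp} \;=\; \vec{Z}(\vec{Z}^T\vec{A}\vec{Z})\vec{Z}^T \;=\; \vec{Z}\hat{\vec{N}}\vec{Z}^T,
\]
and by transposing, $\vec{N}^T = \vec{Z}\hat{\vec{N}}^T\vec{Z}^T$.

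Next, I would compute $\mathrm{range}(\vec{N})$. Because $\vec{Z}^T$ is surjective onto $\mathbb{R}^{n-q}$ and $\hat{\vec{N}}$ is nonsingular by hypothesis, the composition $\hat{\vec{N}}\vec{Z}^T$ is also surjective onto $\mathbb{R}^{n-q}$. Applying $\vec{Z}$ then shows $\mathrm{range}(\vec{N}) = \vec{Z}(\mathbb{R}^{n-q}) = \mathrm{range}(\vec{Z}) = \mathrm{null}(\vec{B})$. The identical reasoning applied to $\vec{N}^T$, using that $\hat{\vec{N}}^T$ is nonsingular iff $\hat{\vec{N}}$ is, yields $\mathrm{range}(\vec{N}^T) = \mathrm{range}(\vec{Z}) = \mathrm{null}(\vec{B})$. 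Comparing the two gives $\mathrm{range}(\vec{N}) = \mathrm{range}(\vec{N}^T)$, as required.

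There is no real obstacle here; the whole argument rests on the two elementary observations that orthonormality of $\vec{Z}$ makes $\vec{Z}^T$ surjective and that composition with a nonsingular matrix preserves surjectivity. The only small subtlety worth stating explicitly is the hypothesis that $\hat{\vec{N}}$ is nonsingular: without it, $\mathrm{range}(\hat{\vec{N}}\vec{Z}^T)$ could be a proper subspace of $\mathbb{R}^{n-q}$ while $\mathrm{range}(\hat{\vec{N}}^T\vec{Z}^T)$ could be a different subspace, and the equality of ranges (which is exactly what licenses the use of GMRES on the PNS equation, hence the lemma's label) would fail.
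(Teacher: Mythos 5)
Your argument is correct and follows essentially the same route the paper takes: both rely on the factorization $\vec{N} = \vec{Z}\hat{\vec{N}}\vec{Z}^T$ together with nonsingularity of $\hat{\vec{N}}$. The paper compresses this to a single sentence (``$\vec{Z}^T\vec{A}\vec{Z}$ has the same row and column spaces''), while you spell out the surjectivity/injectivity chase and identify both ranges explicitly as $\mathrm{range}(\vec{Z}) = \mathrm{null}(\vec{B})$, which is a clean and complete version of the same idea.
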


The proof is straightforward since $\vec{Z}^{T}\vec{A}\vec{Z}$ has
the same row and column spaces. With the compatibility assumption,
Lemma~\ref{lem:gmres} ensures that the standard GMRES can be applied
to system (\ref{eq:OPINSNullSpaceEq}) without breaking down \cite{Reichel2005BGS,Saad86GMRES}.
In the following subsection, we will address the efficient solution
of these compatible systems that may be singular or nonsingular.

\subsubsection{\label{sub:solution-OPNS}Solution of Orthogonally Projected Null-Space
Equation}

Since the PNS system (\ref{eq:NullSpaceEq}) has an infinite number
of solutions, a natural question is which solution of the system suffices
in producing the minimum-norm solution in the sense of (\ref{eq:SingularLangrangeMultiplier}).
In the following, we will address the question first for systems with
a nonsingular null-space equation, followed by more general singular
systems.
\begin{thm}
\label{thm:OPNS1}Given a saddle-point system (\ref{eq:KKT}) where
the null-space equation (\ref{eq:NullSpaceEq}) is nonsingular and
the solution to (\ref{eq:KKT}) is unique in terms of $\vec{x}$,
then the solution of (\ref{eq:OPINSNullSpaceEq}) recovers this unique
$\vec{x}$.\end{thm}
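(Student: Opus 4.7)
The plan is to reduce the PNS equation (\ref{eq:OPINSNullSpaceEq}) back to the null-space equation (\ref{eq:NullSpaceEq}) by left-multiplying with $\vec{Z}^{T}$, invoke the hypothesized nonsingularity to pin down the null-space coordinate vector $\vec{v}$, and then observe that even though the PNS system itself has infinitely many solutions $\vec{w}$, the projected quantity $\vec{x}_{n}=\vec{\Pi}_{\vec{U}}^{\perp}\vec{w}$ that actually feeds into step 5 of Algorithm~\ref{alg:OPINS} is invariant across all such $\vec{w}$ and coincides with the $\vec{x}_{n}$ from Algorithm~\ref{alg:NULLSPACE}.

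Concretely, let $\vec{Z}\in\mathbb{R}^{n\times(n-q)}$ be an orthonormal basis of $\mbox{null}(\vec{B})$, so that $\vec{Z}^{T}\vec{Z}=\vec{I}$ and $\vec{\Pi}_{\vec{U}}^{\perp}=\vec{Z}\vec{Z}^{T}$, as already noted in the excerpt. Take any solution $\vec{w}$ of (\ref{eq:OPINSNullSpaceEq}) (such a $\vec{w}$ exists by Lemma~\ref{lem:compatability}) and multiply both sides on the left by $\vec{Z}^{T}$; using $\vec{Z}^{T}\vec{Z}=\vec{I}$ on the left and defining $\vec{v}:=\vec{Z}^{T}\vec{w}$, this collapses to
\begin{equation*}
\vec{Z}^{T}\vec{A}\vec{Z}\,\vec{v}=\vec{Z}^{T}(\vec{f}-\vec{A}\vec{x}_{p}),
\end{equation*}
which is precisely the null-space equation (\ref{eq:NullSpaceEq}). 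By assumption, its coefficient matrix $\hat{\vec{N}}=\vec{Z}^{T}\vec{A}\vec{Z}$ is nonsingular, so $\vec{v}$ is uniquely determined, independent of which solution $\vec{w}$ of the PNS system we started from. Consequently $\vec{x}_{n}=\vec{\Pi}_{\vec{U}}^{\perp}\vec{w}=\vec{Z}\vec{Z}^{T}\vec{w}=\vec{Z}\vec{v}$ is uniquely determined as well, and $\vec{x}=\vec{x}_{p}+\vec{x}_{n}$ matches exactly the output of the traditional null-space method (Algorithm~\ref{alg:NULLSPACE}), which by hypothesis is the unique $\vec{x}$ of (\ref{eq:KKT}).

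The one step that deserves care, and is really the only nontrivial point, is the potential non-uniqueness of $\vec{w}$: any two solutions differ by a vector in $\mbox{range}(\vec{U})=\mbox{null}(\vec{\Pi}_{\vec{U}}^{\perp})$, so the reduction $\vec{v}=\vec{Z}^{T}\vec{w}$ must be insensitive to such differences. This is automatic because $\vec{Z}^{T}\vec{U}=\vec{0}$, so $\vec{Z}^{T}\vec{w}$ depends only on the $\mbox{null}(\vec{B})$-component of $\vec{w}$; this is what lets the singular system (\ref{eq:OPINSNullSpaceEq}) recover the same unique $\vec{x}$ as the smaller nonsingular system (\ref{eq:NullSpaceEq}). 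Once this invariance is made explicit, the rest of the argument is algebraic bookkeeping.
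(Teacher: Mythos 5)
Your proof is correct and follows essentially the same route as the paper's: left-multiply the PNS equation by $\vec{Z}^{T}$, use $\vec{Z}^{T}\vec{Z}=\vec{I}$ to reduce it to the null-space equation in $\vec{v}=\vec{Z}^{T}\vec{w}$, and invoke nonsingularity of $\hat{\vec{N}}$ to get a unique $\vec{v}$ and hence a unique $\vec{x}_{n}=\vec{Z}\vec{v}$. Your closing paragraph spelling out why the reduction is insensitive to the choice of $\vec{w}$ (namely $\vec{Z}^{T}\vec{U}=\vec{0}$) is a small but worthwhile clarification that the paper leaves implicit.
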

\begin{proof}
Consider the alternative form of the PNS equation in (\ref{eq:OPINS_ALT}).
The system is compatible, so we can always find a solution $\vec{w}\in\mathbb{R}^{n}$
for the equality to hold. Since $\vec{Z}^{T}\vec{Z}=\vec{I}$, left-multiplying
$\vec{Z}^{T}$ on both sides of (\ref{eq:OPINS_ALT}), we obtain the
PNS equation
\[
\vec{Z}^{T}\vec{A}\vec{Z}\vec{Z}^{T}\vec{w}=\vec{Z}^{T}\left(\vec{f}-\vec{A}\vec{x}_{p}\right).
\]
Since system (\ref{eq:NullSpaceEq}) is nonsingular, $\vec{Z}^{T}\vec{w}$
recovers the unique solution for $\vec{v}$, and in turn recovers
the unique $\vec{x}_{n}$ and $\vec{x}$.
\end{proof}

Note that nonsingular (\ref{eq:NullSpaceEq}) includes the cases where
the saddle-point system (\ref{eq:KKT}) is nonsingular. However, it
does not necessarily imply that (\ref{eq:KKT}) is nonsingular, because
$\vec{y}$ may not be unique. An implication of Theorem~\ref{thm:OPNS1}
is that we have the flexibility of solving (\ref{eq:NullSpaceEq})
with any solver even if (\ref{eq:KKT}) is singular. 

For a general singular saddle-point system, the situation is more
complicated. Assume (\ref{eq:KKT}) is compatible in terms of $\vec{f}$,
the following theorem indicates that OPINS finds the minimum-norm
solution of $\vec{x}$.
\begin{thm}
\label{thm:OPNS2}Given a saddle-point system (\ref{eq:KKT}) compatible
in terms of $\vec{f}$, i.e., $\vec{f}\in\mbox{range}(\vec{A})+\mbox{range}(\vec{B}^{T})$;
if $\vec{x}$ is the minimum-norm solution of the PNS equation (\ref{eq:OPINSNullSpaceEq}),
then $\Vert\vec{x}\Vert$ is minimized among all the solutions $\vec{x}=\vec{x}_{n}+\vec{x}_{p}$
that satisfy the constraint $\min_{\vec{x}}\Vert\vec{g}-\vec{B}\vec{x}\Vert$.\end{thm}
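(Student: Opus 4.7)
My plan is to proceed by reducing the minimization of $\Vert\vec{x}\Vert$ over the valid solution set to a minimization of $\Vert\vec{x}_n\Vert$ alone, then to express valid $\vec{x}_n$ via the null-space equation, and finally to identify the minimum-norm PNS solution with the minimum-norm null-space-equation solution.

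First, I would exploit orthogonality of the decomposition. Since $\vec{x}_p=\vec{B}^{+}\vec{g}\in\mbox{range}(\vec{B}^{T})=\mbox{range}(\vec{U})$ and $\vec{x}_n=\vec{\Pi}_{\vec{U}}^{\perp}\vec{w}\in\mbox{null}(\vec{B})=\mbox{range}(\vec{Z})$, the two components are orthogonal, so $\Vert\vec{x}\Vert^{2}=\Vert\vec{x}_p\Vert^{2}+\Vert\vec{x}_n\Vert^{2}$. Because every solution of the least-squares constraint $\min_{\vec{x}}\Vert\vec{g}-\vec{B}\vec{x}\Vert$ decomposes uniquely as $\vec{x}=\vec{B}^{+}\vec{g}+\vec{z}$ with $\vec{z}\in\mbox{null}(\vec{B})$, the component $\vec{x}_p$ is fixed across all admissible solutions, and minimizing $\Vert\vec{x}\Vert$ is equivalent to minimizing $\Vert\vec{x}_n\Vert$.

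Next, I would parametrize the admissible $\vec{x}_n$'s. By the derivation leading to the null-space equation, the first-order optimality condition for the quadratic objective on the constraint hyperplane is exactly $\vec{Z}^{T}\vec{A}\vec{Z}\vec{v}=\vec{Z}^{T}(\vec{f}-\vec{A}\vec{x}_p)$, and the admissible $\vec{x}_n$'s are precisely $\{\vec{Z}\vec{v}:\vec{v}\text{ solves }(\ref{eq:NullSpaceEq})\}$. Since $\vec{Z}$ has orthonormal columns, $\Vert\vec{Z}\vec{v}\Vert=\Vert\vec{v}\Vert$, so it suffices to show that $\vec{x}_n=\vec{\Pi}_{\vec{U}}^{\perp}\vec{w}^{*}$ has the form $\vec{Z}\vec{v}^{*}$ for a minimum-norm solution $\vec{v}^{*}$ of the null-space equation.

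The crux of the argument is a range-containment observation. The coefficient matrix $\vec{N}=\vec{\Pi}_{\vec{U}}^{\perp}\vec{A}\vec{\Pi}_{\vec{U}}^{\perp}$ satisfies $\mbox{range}(\vec{N}^{T})\subseteq\mbox{range}(\vec{\Pi}_{\vec{U}}^{\perp})=\mbox{range}(\vec{Z})$. Since the minimum-norm solution of any consistent linear system lies in the row space of its coefficient matrix, $\vec{w}^{*}\in\mbox{range}(\vec{N}^{T})\subseteq\mbox{range}(\vec{Z})$, so $\vec{\Pi}_{\vec{U}}^{\perp}\vec{w}^{*}=\vec{w}^{*}$ and we may write $\vec{w}^{*}=\vec{Z}\vec{v}^{*}$. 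Substituting this into the PNS equation and left-multiplying by $\vec{Z}^{T}$ (using $\vec{Z}^{T}\vec{Z}=\vec{I}$) shows that $\vec{v}^{*}$ solves the null-space equation.

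Finally, I would prove optimality of $\vec{v}^{*}$. For any other solution $\vec{v}'$ of the null-space equation, lifting to $\vec{w}'=\vec{Z}\vec{v}'$ gives a PNS solution, as one checks by direct substitution. Minimality of $\Vert\vec{w}^{*}\Vert$ then yields $\Vert\vec{v}^{*}\Vert=\Vert\vec{w}^{*}\Vert\leq\Vert\vec{w}'\Vert=\Vert\vec{v}'\Vert$, so $\vec{v}^{*}$ is the minimum-norm solution of (\ref{eq:NullSpaceEq}), and therefore $\vec{x}_n=\vec{Z}\vec{v}^{*}$ achieves the minimum of $\Vert\vec{x}_n\Vert$ over admissible solutions. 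Combined with the orthogonal Pythagorean identity from the first step, this proves $\Vert\vec{x}\Vert$ is minimized. The main conceptual obstacle is the range-containment step $\vec{w}^{*}\in\mbox{range}(\vec{Z})$, since one might worry that the ``extra'' degrees of freedom introduced by enlarging from the $(n-q)$-dimensional null-space equation to the $n$-dimensional PNS equation could enable a PNS solution with smaller norm than any valid $\vec{x}_n$; the observation $\mbox{range}(\vec{N}^{T})\subseteq\mbox{range}(\vec{Z})$ is what rules this out cleanly.
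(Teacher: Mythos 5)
Your proof is correct and follows the same overall architecture as the paper's: decompose $\vec{x}=\vec{x}_p+\vec{x}_n$ orthogonally, observe $\vec{x}_p$ is fixed across all admissible solutions so that only $\Vert\vec{x}_n\Vert$ matters, reduce to showing the PNS minimum-norm solution corresponds to the minimum-norm solution of the null-space equation, and use the isometry $\Vert\vec{Z}\vec{v}\Vert=\Vert\vec{v}\Vert$ to close the loop. The one genuine variation is at the crux you identify: you invoke the standard fact that the minimum-norm solution of a consistent system lies in $\mbox{range}(\vec{N}^T)$, and combine it with $\mbox{range}(\vec{N}^T)\subseteq\mbox{range}(\vec{\Pi}_{\vec{U}}^{\perp})=\mbox{range}(\vec{Z})$ to conclude $\vec{w}^*\in\mbox{range}(\vec{Z})$ outright, so that $\vec{w}^*=\vec{Z}\vec{v}^*$ with $\Vert\vec{v}^*\Vert=\Vert\vec{w}^*\Vert$ an exact equality. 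The paper instead argues via the inequality $\Vert\vec{Z}^T\vec{w}\Vert\le\Vert\vec{w}\Vert$ with equality precisely when $\vec{w}\in\mbox{range}(\vec{Z})$, and leaves the ``lift $\vec{v}'\mapsto\vec{Z}\vec{v}'$ gives a PNS solution'' direction somewhat implicit. Your route is a touch cleaner and more self-contained because the row-space observation makes the two-sided correspondence between PNS solutions and null-space-equation solutions explicit before any norm comparison is attempted; the paper's route is marginally shorter since it never names the Moore--Penrose/row-space fact. Both are valid; neither buys a stronger conclusion.
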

\begin{proof}
Note that $\vec{x}=\vec{x}_{p}+\vec{x}_{n}$, where $\vec{x}_{p}\in\mbox{range}(\vec{B}^{T})$
and $\vec{x}_{n}\in\mbox{null}(\vec{B})$, so $\Vert\vec{x}\Vert^{2}=\Vert\vec{x}_{p}\Vert^{2}+\Vert\vec{x}_{n}\Vert^{2}$.
In step 3 of QRCP, $\vec{x}_{p}$ is the minimum-norm solution in
$\mbox{range}(\vec{B}^{T})$ that minimizes $\Vert\vec{g}-\vec{B}\vec{x}\Vert$.
Therefore, we only need to show that $\Vert\vec{x}_{n}\Vert$ is minimized
in $\mbox{null}(\vec{B})$, where $\vec{x}_{n}=\vec{Z}\vec{v}$. This
is satisfied if $\Vert\vec{v}\Vert$ is minimized among the exact
solutions to the null-space equation (\ref{eq:NullSpaceEq}), i.e.,
\[
\vec{Z}^{T}\vec{A}\vec{Z}\vec{v}=\vec{Z}^{T}(\vec{f}-\vec{A}\vec{x}_{p}).
\]
Since $\vec{Z}$ is orthonormal, $\Vert\vec{x}_{n}\Vert=\Vert\vec{v}\Vert$.
In OPINS, if $\vec{w}$ is an exact solution to (\ref{eq:OPINSNullSpaceEq}),
\[
\vec{Z}\vec{Z}^{T}\vec{A}\vec{Z}\vec{Z}^{T}\vec{w}=\vec{Z}\vec{Z}^{T}(\vec{f}-\vec{A}\vec{x}_{p}).
\]
Since $\vec{Z}^{T}\vec{Z}=\vec{I}$, by left-multiplying $\vec{Z}^{T}$
on both sides, we have 
\[
\vec{Z}^{T}\vec{A}\vec{Z}\vec{Z}^{T}\vec{w}=\vec{Z}^{T}(\vec{f}-\vec{A}\vec{x}_{p}),
\]
so $\vec{v}=\vec{Z}^{T}\vec{w}$ is an exact solution of the null-space
equation (\ref{eq:NullSpaceEq}). Note that $\Vert\vec{v}\Vert=\Vert\vec{Z}^{T}\vec{w}\Vert\leq\Vert\vec{Z}^{T}\Vert\Vert\vec{w}\Vert=\Vert\vec{w}\Vert$,
and it is an equality if $\vec{w}\in\mbox{range}(\vec{Z})=\mbox{null}(\vec{B})$,
i.e., $\vec{w}=\vec{Z}\vec{Z}^{T}\vec{w}=\vec{x}_{n}$. Therefore,
the minimum-norm solution of $\vec{w}$ in (\ref{eq:OPINSNullSpaceEq})
minimizes $\Vert\vec{x}_{n}\Vert$. Furthermore, $\vec{g}-\vec{B}\vec{x}=\vec{g}-\vec{B}\vec{x}_{p}$,
so $\Vert\vec{x}\Vert$ is minimized among all solutions under the
constraint $\min_{\vec{x}}\Vert\vec{g}-\vec{B}\vec{x}\Vert$.
\end{proof}

Note that Theorem~\ref{thm:OPNS2} is more general than Theorem~\ref{thm:OPNS1},
as it includes Theorem~\ref{thm:OPNS2} as a special case. An implication
of the two theorems is that if the null-space equation is nonsingular,
we can use any singular solver for OPINS. Examples of such solvers
include MINRES \cite{paige1975solution} and SYMMLQ \cite{paige1975solution}
for symmetric and compatible systems, GMRES \cite{Saad86GMRES}, LSQR
\cite{Paige92LSQR} or LSMR \cite{Fong11LSMR} for nonsymmetric systems.
In addition, we can use any left, right, or symmetric preconditioner
for the solver. However, if the null-space equation is singular,
we must use a solver that can compute the minimum-norm solution for
compatible singular systems, which may exclude GMRES for nonsymmetric
systems if $\mbox{range}(\vec{A})\neq\mbox{range}(\vec{A}^{T})$.
In addition, we can use only left preconditioners that do not alter
the null space of the coefficient matrix. We will further discuss
the preconditioners in Section~\ref{sub:solution-OPNS}.

Finally, regarding the $\vec{y}$ component in (\ref{eq:NullSpaceEq}),
the values of $\vec{y}$ are immaterial for many applications. However,
if desired, we can obtain $\vec{y}$ by solving $\vec{B}^{T}\vec{y}=\left(\vec{f}-\vec{A}\vec{x}\right)$
using QRCP. In general, $\Vert\vec{y}\Vert$ may not be minimized
if $\vec{B}$ is rank deficient, but the norm is typically small.

\subsection{Efficiency of OPINS}

We now analyze the computational cost of OPINS. There are two components
that are relatively more expensive. The first is the QRCP in step
1, which takes $\mathcal{O}(\frac{4}{3}m^{2}n)$ operations when using
Householder transformation. When $m\ll n$, this operation is far
more efficient than finding an orthonormal basis of $\mbox{null}(\vec{B})$.
After obtaining the QR factorization, steps 3, 5, and 6 all take $\mathcal{O}(mn)$
operations. In addition, if $\vec{B}$ is sparse, $\vec{Q}$ and $\vec{R}$
are in general also sparse, leading to even more cost savings. The
other one is the solution of the singular system (\ref{eq:OPINSNullSpaceEq}).
When $\vec{A}$ is large and sparse, it is not advisable to use truncated
SVD or rank-revealing QR factorization for this system. Instead, we
apply a Krylov-subspace method for singular systems. Within each iteration
of these methods, the dominating operation is matrix-vector multiplications,
which cost $\mathcal{O}(N+nq)$, where $N$ denote the total number
of nonzeros in $\vec{A}$. The convergence of these methods depend
on the nonzero eigenvalues \cite{greenbaum1994max}. The following
proposition correlates the eigenvalues of the coefficient matrices
$\hat{\vec{N}}$ and $\vec{N}$ in (\ref{eq:NullSpaceEq}) and (\ref{eq:OPINSNullSpaceEq}),
respectively.
\begin{prop}
Given a saddle-point system (\ref{eq:KKT}), the PNS matrix in (\ref{eq:OPINSNullSpaceEq})
has the same nonzero eigenvalues as the null-space system (\ref{eq:NullSpaceEq})
with an orthonormal $\vec{Z}$.\end{prop}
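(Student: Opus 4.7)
My plan is to exploit the identity $\vec{\Pi}_{\vec{U}}^{\perp}=\vec{Z}\vec{Z}^{T}$ established earlier in the section, so that
\[
\vec{N}=\vec{\Pi}_{\vec{U}}^{\perp}\vec{A}\vec{\Pi}_{\vec{U}}^{\perp}=\vec{Z}\vec{Z}^{T}\vec{A}\vec{Z}\vec{Z}^{T},\qquad\hat{\vec{N}}=\vec{Z}^{T}\vec{A}\vec{Z},
\]
and then invoke the standard fact that for conformable matrices $\vec{X}$ and $\vec{Y}$ the products $\vec{X}\vec{Y}$ and $\vec{Y}\vec{X}$ have identical nonzero eigenvalues, counted with algebraic multiplicity. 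The natural choice is $\vec{X}=\vec{Z}\in\mathbb{R}^{n\times(n-q)}$ and $\vec{Y}=\vec{Z}^{T}\vec{A}\vec{Z}\vec{Z}^{T}\in\mathbb{R}^{(n-q)\times n}$, which gives $\vec{X}\vec{Y}=\vec{N}$ and, because $\vec{Z}$ is orthonormal so $\vec{Z}^{T}\vec{Z}=\vec{I}$, $\vec{Y}\vec{X}=\vec{Z}^{T}\vec{A}\vec{Z}=\hat{\vec{N}}$. The conclusion then follows immediately.

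For readers who prefer a self-contained argument rather than an appeal to the general lemma, I would provide a direct bijection on eigenpairs. Given a nonzero eigenvalue $\lambda$ of $\hat{\vec{N}}$ with eigenvector $\vec{v}$, I would set $\vec{u}=\vec{Z}\vec{v}$ and verify
\[
\vec{N}\vec{u}=\vec{Z}\vec{Z}^{T}\vec{A}\vec{Z}\vec{Z}^{T}\vec{Z}\vec{v}=\vec{Z}\hat{\vec{N}}\vec{v}=\lambda\vec{u},
\]
noting $\vec{u}\neq\vec{0}$ because $\vec{Z}$ has full column rank. Conversely, if $\vec{N}\vec{u}=\lambda\vec{u}$ with $\lambda\neq0$, then $\vec{u}=\lambda^{-1}\vec{N}\vec{u}\in\mbox{range}(\vec{Z})$, so $\vec{u}=\vec{Z}\vec{v}$ for $\vec{v}=\vec{Z}^{T}\vec{u}\neq\vec{0}$, and left-multiplying the eigenvalue equation by $\vec{Z}^{T}$ yields $\hat{\vec{N}}\vec{v}=\lambda\vec{v}$. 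This demonstrates the bijection on eigenvectors; a straightforward extension to generalized eigenvectors (or Jordan chains) then handles algebraic multiplicities.

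There is no real obstacle here: the only subtlety is to be careful about multiplicity rather than merely about the set of nonzero eigenvalues, which is why I prefer either the $\vec{X}\vec{Y}/\vec{Y}\vec{X}$ formulation (where the statement with multiplicity is classical) or a Jordan-chain version of the direct argument. Given how routine the calculation is, I would keep the proof short, state the $\vec{X}\vec{Y}/\vec{Y}\vec{X}$ identity as the main tool, exhibit the factorization above, and conclude in one or two lines.
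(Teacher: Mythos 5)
Your proposal is correct, and your ``self-contained'' fallback argument is essentially the paper's proof: both exhibit the explicit maps $\hat{\vec{x}}\mapsto\vec{Z}\hat{\vec{x}}$ and $\vec{x}\mapsto\vec{Z}^{T}\vec{x}$ between eigenvectors of $\hat{\vec{N}}=\vec{Z}^{T}\vec{A}\vec{Z}$ and $\vec{N}=\vec{Z}\hat{\vec{N}}\vec{Z}^{T}$ and verify the eigenvalue equations by direct computation using $\vec{Z}^{T}\vec{Z}=\vec{I}$. Your preferred route, however, is genuinely different from the paper's: by factoring $\vec{N}=\vec{X}\vec{Y}$ and $\hat{\vec{N}}=\vec{Y}\vec{X}$ with $\vec{X}=\vec{Z}$ and $\vec{Y}=\vec{Z}^{T}\vec{A}\vec{Z}\vec{Z}^{T}$ and invoking the classical fact that $\vec{X}\vec{Y}$ and $\vec{Y}\vec{X}$ have the same nonzero eigenvalues with multiplicity, you get the conclusion in one line and also account for algebraic multiplicities, which the paper's eigenvector argument does not address. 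You are also more careful than the paper in the converse direction: the paper asserts that $\vec{Z}^{T}\vec{x}$ is an eigenvector of $\hat{\vec{N}}$ but does not rule out the possibility that $\vec{Z}^{T}\vec{x}=\vec{0}$, whereas you observe that $\lambda\neq0$ forces $\vec{x}=\lambda^{-1}\vec{N}\vec{x}\in\mbox{range}(\vec{Z})$ and hence $\vec{Z}^{T}\vec{x}\neq\vec{0}$. Both arguments reach the same conclusion; yours is both shorter (via the $\vec{X}\vec{Y}$/$\vec{Y}\vec{X}$ lemma) and more watertight on the small points the paper glosses over.
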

\begin{proof}
Let $\hat{\vec{N}}=\vec{Z}^{T}\vec{A}\vec{Z}$ and $\vec{N}=\vec{Z}\left(\vec{Z}^{T}\vec{A}\vec{Z}\right)\vec{Z}^{T}=\vec{Z}\hat{\vec{N}}\vec{Z}^{T}$.
If $\hat{\lambda}$ is a nonzero eigenvalue of $\hat{\vec{N}}$, and
$\hat{\vec{x}}$ is a corresponding eigenvalue, then 
\[
\vec{N}(\vec{Z}\hat{\vec{x}})=\left(\vec{Z}\hat{\vec{N}}\vec{Z}^{T}\right)\left(\vec{Z}\hat{\vec{x}}\right)=\vec{Z}\hat{\vec{N}}\hat{\vec{x}}=\hat{\lambda}\vec{Z}\hat{\vec{x}},
\]
so $\hat{\lambda}$ and $\vec{Z}\hat{\vec{x}}$ form a pair of eigenvalue
and eigenvector of $\vec{N}$. Conversely, if $\lambda$ a nonzero
eigenvalue of $\vec{N}$ and $\vec{x}$ is its corresponding eigenvalue,
then 
\[
\hat{\vec{N}}(\vec{Z}^{T}\vec{x})=\left(\vec{Z}^{T}\vec{Z}\right)\left(\vec{Z}^{T}\vec{A}\vec{Z}\right)\vec{Z}^{T}\vec{x}=\vec{Z}^{T}\vec{N}\vec{x}=\lambda\vec{Z}^{T}\vec{x}.
\]
Therefore, $\vec{N}$ and $\hat{\vec{N}}$ have the same nonzero eigenvalues.
\end{proof}

Based on the above proposition, the convergence rate of the Krylov
subspace method on the PNS equation (\ref{eq:OPINSNullSpaceEq}) is
identical to that on the null-space equation (\ref{eq:NullSpaceEq})
with an orthonormal $\vec{Z}$. However, to accelerate the convergence
of these methods, it is desirable to use preconditioners, which we
discuss next.

\section{\label{sec:Preconditioners}Preconditioners for OPINS}

In OPINS, the most time consuming step is typically the iterative
solver for the PNS equation (\ref{eq:OPINSNullSpaceEq}) in step 4.
To speed up its computation, it is critical to use preconditioners.
In this section, we present some principles for constructing effective
preconditioners for PNS equations, based on the recent work on the
projected Krylov methods \cite{gould2014projected}.

Let $\vec{N}=\vec{\Pi}_{\vec{U}}^{\perp}\vec{A}\vec{\Pi}_{\vec{U}}^{\perp}$.
The general idea of preconditioning is to find a matrix $\vec{M}$
that approximates the coefficient matrix $\vec{N}$, or $\vec{M}^{+}$
that approximates the pseudoinverse $\vec{N}^{+}$. The latter form
is more convenient for solving the PNS systems. Algorithm~\ref{alg:OPINS-P}
outlines the pseudocode of the preconditioned OPINS with a left preconditioner.
The preconditioning routine takes an operator $\vec{M}$ to evaluate
$\vec{M}^{+}\vec{b}$ for any $\vec{b}\in\mathbb{R}^{n}$. Note that
for symmetric systems, most preconditioned Krylov-subspace methods
would apply the preconditioners symmetrically. Specifically, suppose
$\vec{M}^{+}$ has a symmetric factorization $\vec{M}^{+}=\vec{L}\vec{L}^{T}$,
then these methods solve the equation 
\[
\vec{L}^{T}\vec{\Pi}_{\vec{U}}^{\perp}\vec{A}\vec{\Pi}_{\vec{U}}^{\perp}\vec{L}\tilde{\vec{w}}=\vec{L}^{T}\vec{\Pi}_{\vec{U}}^{\perp}\left(\vec{f}-\vec{A}\vec{x}_{p}\right)
\]
in the preconditioned method, and then computes $\vec{w}=\vec{L}\tilde{\vec{w}}$.
Typically, the algorithm is constructed such that the explicit factorization
$\vec{M}^{+}=\vec{L}\vec{L}^{T}$ is not needed. We omit the details
of such preconditioned Krylov-subspace methods; interested readers
may refer to \cite{BBC94Templates,Saad03IMS}.

\begin{algorithm}
\protect\caption{\label{alg:OPINS-P}Preconditioned OPINS}

\textbf{input}: $\vec{A}$, $\vec{B}$, $\vec{f}$, $\vec{g}$, $\vec{G}$,
tolerances for rank estimation and iterative solver

\textbf{output}: $\vec{x}$, $\vec{y}$ (optional)

\begin{algorithmic}[1]

\STATE do first three steps of Algorithm ~\ref{alg:OPINS}

\STATE solve $\vec{M}^{+}\vec{\Pi}_{\vec{U}}^{\perp}\vec{A}\vec{\Pi}_{\vec{U}}^{\perp}\vec{w}=\vec{M}^{+}\vec{\Pi}_{\vec{U}}^{\perp}\left(\vec{f}-\vec{A}\vec{x}_{p}\right)$
using a preconditioned Krylov-subspace method

\STATE do the last two steps of Algorithm ~\ref{alg:OPINS}.

\end{algorithmic}
\end{algorithm}

In this section, we will focus on the construction of $\vec{M}$ or
$\vec{M}^{+}$. A straightforward choice of $\vec{M}$ is the approximation
of $\vec{A}$. Possible candidates include SSOR-type preconditioners,
incomplete factorization, and multigrid methods. We propose to approximate
$\vec{N}^{+}$ with $\vec{M}^{+}=\vec{P}_{G}=\vec{Z}\left(\vec{Z}^{T}\vec{G}\vec{Z}\right)^{-1}\vec{Z}^{T}$,
where $\vec{G}$ is an approximation of $\vec{A}$ and $\vec{Z}^{T}\vec{G}\vec{Z}$
is nonsingular. We refer to this preconditioner as the \emph{projected
preconditioner}. As we shall show in this section, for nonsingular
systems it is equivalent to the constraint preconditioner in the projected
Krylov methods \cite{gould2014projected,keller2000constraint}. In
the following, we will first discuss the details of implementing $\vec{P}_{G}$
as an operator for nonsymmetric systems, and then specialize it for
symmetric systems.

\subsection{Preconditioners for Nonsymmetric Systems}

If the saddle-point system is nonsymmetric, the projected preconditioner
applies as long as $\vec{G}$ is nonsingular. For example, we can
take $\vec{G}$ as the SOR-style preconditioner or incomplete LU factorization
of $\vec{A}$. Then, $\vec{P}_{G}=\vec{Z}\left(\vec{Z}^{T}\vec{G}\vec{Z}\right)^{-1}\vec{Z}^{T}$,
and we use $\vec{P}_{G}$ as a left preconditioner. The following
proposition states that OPINS with the projected preconditioner is
equivalent to applying $\vec{Z}^{T}\vec{G}\vec{Z}$ as a left preconditioner
to a null-space equation, and hence is equivalent to the constraint-preconditioned
null-space methods \cite{gould2014projected,keller2000constraint}.
\begin{prop}
\label{prp:precond-1}Assume $\vec{Z}^{T}\vec{G}\vec{Z}$ is nonsingular.
OPINS with the projected preconditioner as a left preconditioner is
equivalent to left preconditioning the null-space equation with $\vec{Z}^{T}\vec{G}\vec{Z}$.\end{prop}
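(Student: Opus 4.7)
The plan is to verify the equivalence by direct algebraic manipulation, using only the orthonormality of $\vec{Z}$. First I would substitute $\vec{\Pi}_{\vec{U}}^{\perp} = \vec{Z}\vec{Z}^T$ and the definition $\vec{P}_G = \vec{Z}(\vec{Z}^T \vec{G} \vec{Z})^{-1} \vec{Z}^T$ into the preconditioned OPINS equation
\begin{equation*}
\vec{P}_G \vec{\Pi}_{\vec{U}}^{\perp} \vec{A} \vec{\Pi}_{\vec{U}}^{\perp} \vec{w} \;=\; \vec{P}_G \vec{\Pi}_{\vec{U}}^{\perp} (\vec{f} - \vec{A}\vec{x}_p).
\end{equation*}
Applying $\vec{Z}^T \vec{Z} = \vec{I}$ to collapse the repeated inner factors, this reduces to
\begin{equation*}
\vec{Z}(\vec{Z}^T \vec{G} \vec{Z})^{-1} \vec{Z}^T \vec{A} \vec{Z} \vec{Z}^T \vec{w} \;=\; \vec{Z}(\vec{Z}^T \vec{G} \vec{Z})^{-1} \vec{Z}^T (\vec{f} - \vec{A}\vec{x}_p).
\end{equation*}

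Next I would left-multiply both sides by $\vec{Z}^T$, again invoke $\vec{Z}^T \vec{Z} = \vec{I}$, and substitute $\vec{v} = \vec{Z}^T \vec{w}$. This gives
\begin{equation*}
(\vec{Z}^T \vec{G} \vec{Z})^{-1}(\vec{Z}^T \vec{A} \vec{Z})\vec{v} \;=\; (\vec{Z}^T \vec{G} \vec{Z})^{-1}\vec{Z}^T(\vec{f} - \vec{A}\vec{x}_p),
\end{equation*}
which is precisely the null-space equation (\ref{eq:NullSpaceEq}) left-preconditioned by $\vec{Z}^T \vec{G} \vec{Z}$. Conversely, if $\vec{v}$ solves the preconditioned null-space equation, then $\vec{w} = \vec{Z}\vec{v}$ solves the preconditioned OPINS system, so the two systems are equivalent at the level of exact solutions, and the recovered iterate $\vec{x}_n = \vec{\Pi}_{\vec{U}}^{\perp}\vec{w} = \vec{Z}\vec{v}$ agrees with the null-space method.

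To lift this to equivalence of the Krylov iterates, I would set $T = \vec{P}_G \vec{N}$ and $\hat{T} = (\vec{Z}^T \vec{G} \vec{Z})^{-1}(\vec{Z}^T \vec{A} \vec{Z})$, and read from the identity above that $T = \vec{Z}\hat{T}\vec{Z}^T$. A simple induction using $\vec{Z}^T\vec{Z} = \vec{I}$ then yields $T^k = \vec{Z}\hat{T}^k \vec{Z}^T$ for all $k \geq 0$. With initial residuals related by $\vec{r}_0 = \vec{Z}\hat{\vec{r}}_0$, this gives $T^k \vec{r}_0 = \vec{Z}\hat{T}^k \hat{\vec{r}}_0$, so the two Krylov subspaces satisfy $\mathcal{K}_k(T, \vec{r}_0) = \vec{Z}\,\mathcal{K}_k(\hat{T},\hat{\vec{r}}_0)$, identifying the iteration sequences term by term under the isometry $\vec{v} \mapsto \vec{Z}\vec{v}$.

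The algebra is routine once $\vec{Z}^T\vec{Z} = \vec{I}$ is in play. The main subtlety I would want to handle carefully is that $\vec{Z}$ is only an analytical device and is never actually formed in OPINS; the proposition should be independent of the particular orthonormal basis chosen for $\mbox{null}(\vec{B})$. This is addressed by noting that $\vec{Z}\vec{Z}^T = \vec{\Pi}_{\vec{U}}^{\perp}$ is intrinsic, so the OPINS side is manifestly basis-independent, while a change of orthonormal basis $\vec{Z} \mapsto \vec{Z}\vec{Q}$ with $\vec{Q}$ orthogonal transforms the null-space-equation iterates by $\vec{v} \mapsto \vec{Q}^T \vec{v}$ and therefore leaves $\vec{x}_n = \vec{Z}\vec{v}$ unchanged.
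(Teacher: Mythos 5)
Your proof follows the same core algebraic route as the paper: substitute $\vec{\Pi}_{\vec{U}}^{\perp}=\vec{Z}\vec{Z}^{T}$ and $\vec{P}_{G}=\vec{Z}(\vec{Z}^{T}\vec{G}\vec{Z})^{-1}\vec{Z}^{T}$, collapse the repeated factors via $\vec{Z}^{T}\vec{Z}=\vec{I}$, and left-multiply by $\vec{Z}^{T}$ with the substitution $\vec{v}=\vec{Z}^{T}\vec{w}$. Where you go beyond the paper is in lifting the equivalence from ``the two linear systems have corresponding solutions'' to ``the two preconditioned Krylov iterations produce corresponding iterates,'' via $\vec{P}_{G}\vec{N}=\vec{Z}\hat{T}\vec{Z}^{T}$ and the induced isometry between Krylov subspaces; that is a legitimate and arguably necessary strengthening, since ``equivalent'' for iterative methods should really mean equivalence of iterates, not just of exact solutions. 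One small caveat: the identity $T^{k}=\vec{Z}\hat{T}^{k}\vec{Z}^{T}$ holds for $k\geq 1$ but not for $k=0$ (where $T^{0}=\vec{I}_{n}$ whereas $\vec{Z}\hat{T}^{0}\vec{Z}^{T}=\vec{Z}\vec{Z}^{T}=\vec{\Pi}_{\vec{U}}^{\perp}$); this does not damage your conclusion because $\vec{r}_{0}=\vec{Z}\hat{\vec{r}}_{0}\in\mbox{range}(\vec{Z})$, so $T^{k}\vec{r}_{0}=\vec{Z}\hat{T}^{k}\hat{\vec{r}}_{0}$ still holds for all $k\geq 0$, but you should state the restriction on $k$ or argue directly on $T^{k}\vec{r}_{0}$. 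The closing remark about basis-independence of $\vec{Z}\vec{Z}^{T}$ is sound and a nice sanity check, though not needed for the proposition itself.
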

\begin{proof}
Since $\vec{P}_{G}$ is applied as a left preconditioner, the preconditioned
OPINS solves 
\[
\underbrace{\vec{Z}\left(\vec{Z}^{T}\vec{G}\vec{Z}\right)^{-1}\vec{Z}^{T}}_{\vec{P}_{G}}\underbrace{\vec{Z}\vec{Z}^{T}\vec{A}\vec{Z}\vec{Z}^{T}}_{\vec{N}}\vec{w}=\underbrace{\vec{Z}\left(\vec{Z}^{T}\vec{G}\vec{Z}\right)^{-1}\vec{Z}^{T}}_{\vec{P}_{G}}\vec{Z}\vec{Z}^{T}\left(\vec{f}-\vec{A}\vec{x}_{p}\right).
\]
Since $\vec{Z}^{T}\vec{Z}=\vec{I}$ and $\vec{Z}$ is orthonormal,
it is equivalent to solving 
\[
\left(\vec{Z}^{T}\vec{G}\vec{Z}\right)^{-1}\vec{Z}^{T}\vec{A}\vec{Z}\vec{v}=\left(\vec{Z}^{T}\vec{G}\vec{Z}\right)^{-1}\vec{Z}^{T}\left(\vec{f}-\vec{A}\vec{x}_{p}\right),
\]
i.e., applying $\vec{Z}^{T}\vec{G}\vec{Z}$ as a left preconditioner
to (\ref{eq:NullSpaceEq}), where $\vec{v}=\vec{Z}^{T}\vec{w}$.
\end{proof}

As shown in \cite{keller2000constraint}, the eigenvalues of the constraint-preconditioned
null-space equation are well clustered if $\vec{G}$ is a good approximation
of $\vec{A}$. Due to the above equivalence, the projected preconditioner
is a good choice for OPINS. Note that when applying $\vec{P}_{G}$
as a left preconditioner, it does not alter the null-space of $\vec{N}$,
as claimed in the following proposition.
\begin{prop}
\label{prp:left-precond}Assume $\vec{Z}^{T}\vec{G}\vec{Z}$ is nonsingular.
OPINS with the projected preconditioner as the left preconditioner
does not alter the null-space of $\vec{N}$, i.e., $\mbox{null}(\vec{P}_{G}\vec{N})=\mbox{null}(\vec{N})$.\end{prop}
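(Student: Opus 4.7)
The plan is to establish the claim by showing the two set inclusions $\mathrm{null}(\vec{N})\subseteq\mathrm{null}(\vec{P}_G\vec{N})$ and $\mathrm{null}(\vec{P}_G\vec{N})\subseteq\mathrm{null}(\vec{N})$ separately. The first is immediate: if $\vec{N}\vec{v}=\vec{0}$ then certainly $\vec{P}_G\vec{N}\vec{v}=\vec{0}$, so there is nothing to prove. All the content lies in the reverse inclusion, which I would establish by peeling off factors of $\vec{Z}$ using orthonormality $\vec{Z}^T\vec{Z}=\vec{I}$ together with the hypothesis that $\vec{Z}^T\vec{G}\vec{Z}$ is nonsingular.

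Concretely, suppose $\vec{v}\in\mathrm{null}(\vec{P}_G\vec{N})$. Write out
\[
\vec{P}_G\vec{N}\vec{v}=\vec{Z}\bigl(\vec{Z}^T\vec{G}\vec{Z}\bigr)^{-1}\vec{Z}^T\,\vec{Z}\vec{Z}^T\vec{A}\vec{Z}\vec{Z}^T\vec{v}=\vec{0},
\]
and use $\vec{Z}^T\vec{Z}=\vec{I}$ to collapse this to $\vec{Z}\bigl(\vec{Z}^T\vec{G}\vec{Z}\bigr)^{-1}\vec{Z}^T\vec{A}\vec{Z}\vec{Z}^T\vec{v}=\vec{0}$. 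Left-multiplying by $\vec{Z}^T$ and again invoking $\vec{Z}^T\vec{Z}=\vec{I}$ removes the outer $\vec{Z}$, yielding $\bigl(\vec{Z}^T\vec{G}\vec{Z}\bigr)^{-1}\vec{Z}^T\vec{A}\vec{Z}\vec{Z}^T\vec{v}=\vec{0}$. By the nonsingularity of $\vec{Z}^T\vec{G}\vec{Z}$, its inverse is also nonsingular, so we can strip it off to obtain $\vec{Z}^T\vec{A}\vec{Z}\vec{Z}^T\vec{v}=\vec{0}$. Left-multiplying by $\vec{Z}$ recovers $\vec{Z}\vec{Z}^T\vec{A}\vec{Z}\vec{Z}^T\vec{v}=\vec{N}\vec{v}=\vec{0}$, i.e., $\vec{v}\in\mathrm{null}(\vec{N})$, completing the inclusion.

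There is essentially no substantive obstacle here: the result is an algebraic consequence of two facts already in hand, namely the orthonormality of $\vec{Z}$ (which gives both $\vec{Z}^T\vec{Z}=\vec{I}$ and $\vec{\Pi}_{\vec{Z}}=\vec{Z}\vec{Z}^T=\vec{\Pi}_{\vec{U}}^{\perp}$) and the standing assumption that $\vec{Z}^T\vec{G}\vec{Z}$ is invertible. The only subtlety worth flagging is that $\vec{P}_G$ itself is singular on all of $\mathbb{R}^n$, so one cannot cancel it outright; the argument relies on cancelling only the invertible inner block $\vec{Z}^T\vec{G}\vec{Z}$ after sandwiching with $\vec{Z}^T$ and $\vec{Z}$. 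This is exactly the step that makes the hypothesis on $\vec{Z}^T\vec{G}\vec{Z}$ indispensable, and it is also what ensures that left preconditioning by $\vec{P}_G$ preserves the null space so that the singular-system solvers discussed in Section~\ref{sub:solution-OPNS} remain applicable.
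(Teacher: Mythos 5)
Your proof is correct and follows essentially the same route as the paper's: the forward inclusion is trivial, and for the reverse you strip off the outer $\vec{Z}$ and the invertible block $\left(\vec{Z}^{T}\vec{G}\vec{Z}\right)^{-1}$ by sandwiching with $\vec{Z}^{T}$ and invoking orthonormality. The paper states the same cancellation more tersely by appealing to the full column rank of $\vec{Z}$; your version simply makes the multiplications explicit.
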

\begin{proof}
For any vector $\vec{v}\in\mbox{null}(\vec{N})$, $\vec{v}\in\mbox{null}(\vec{P}_{G}\vec{N})$.
If $\vec{v}\in\mbox{null}(\vec{P}_{G}\vec{N})$, we have
\[
\vec{Z}\left(\vec{Z}^{T}\vec{G}\vec{Z}\right)^{-1}\vec{Z}^{T}\vec{A}\vec{Z}\vec{Z}^{T}\vec{v}=0.
\]
Since $\vec{Z}$ has full column rank and $\vec{Z}^{T}\vec{G}\vec{Z}$
is nonsingular, $\vec{Z}^{T}\vec{A}\vec{Z}\vec{Z}^{T}\vec{v}=0$.
It follows that $\vec{v}\in\mbox{null}(\vec{N})$.
\end{proof}

The above proposition indicates that we can apply the projected preconditioner
as a left preconditioner to singular saddle-point that is compatible
in $\vec{f}$, while ensuring $\vec{x}$ has minimum norm.

The remaining task is to find a way to provide $\vec{P}_{G}$ as an
operator for efficient computation of $\vec{s}=\vec{P}_{G}\vec{b}$
for any $\vec{b}\in\mathbb{R}^{n}$. Note that $\vec{s}$ is the solution
to the following modified but simpler saddle-point system 
\begin{equation}
\begin{bmatrix}\vec{G} & \vec{U}\\
\vec{U}^{T} & \vec{0}
\end{bmatrix}\begin{bmatrix}\vec{s}\\
\vec{t}
\end{bmatrix}=\begin{bmatrix}\vec{b}\\
\vec{0}
\end{bmatrix},\label{eq:constraint_preconditioner}
\end{equation}
where $\vec{U}$ is composed of an orthonormal basis of $\mbox{range}(\vec{B}^{T})$.
Because from the null-space method, we have
\begin{equation}
\vec{s}=\vec{Z}\left(\vec{Z}^{T}\vec{G}\vec{Z}\right)^{-1}\vec{Z}^{T}\vec{b}=\vec{P}_{G}\vec{b}.\label{eq:constraint_pj2}
\end{equation}
This implies that $\vec{P}_{G}$ can be given as an operator through
the solution of (\ref{eq:constraint_preconditioner}). (\ref{eq:constraint_prec})
is similar to the procedure for evaluating the constraint preconditioner
in the projected Krylov method \cite{gould2014projected}, for which
the off-diagonal entries are $\vec{B}^{T}$ and $\vec{B}$ instead
of $\vec{U}$ and $\vec{U}^{T}$. By replacing $\vec{B}^{T}$ by $\vec{U}$,
(\ref{eq:constraint_prec}) can be solved more efficiently, and it
is also applicable if $\vec{B}$ is rank deficient.

If $\vec{G}$ is a simple matrix or operator, such as the preconditioner
based on SOR or incomplete factorization, we can solve (\ref{eq:constraint_prec})
efficiently by using the range-space method, given by (\ref{eq:RangeSpace1})
and (\ref{eq:RangeSpace2}), especially when $m\ll n$. Algorithm~\ref{alg:PK}
outlines the procedure for computing $\vec{P}_{G}\vec{b}$ using the
range-space method, where $\vec{U}$ is composed of an orthonormal
basis of $\vec{B}^{T}$.

\begin{algorithm}
\protect\caption{\label{alg:PK}Operator $\vec{P}_{G}$ for Projected Preconditioner}

\textbf{input}: $\vec{G}$, $\vec{U}$, $\vec{b}$

\textbf{output}: $\vec{s}=\vec{P}_{G}\vec{b}$

\begin{algorithmic}[1]

\STATE solve $\vec{G}\vec{r}=\vec{b}$

\STATE solve $(\vec{U}^{T}\vec{G}^{-1}\vec{U})\vec{t}=\vec{U}^{T}\vec{r}$

\STATE solve $\vec{G}\vec{s}=\vec{b}-\vec{U}\vec{t}$

\end{algorithmic}
\end{algorithm}

\subsection{Preconditioners for Symmetric Systems}

For symmetric systems, most preconditioned Krylov subspace methods
require the preconditioner to be symmetric. Hence, we require that
$\vec{G}$ is symmetric and $\vec{Z}^{T}\vec{G}\vec{Z}$ is SPD. Therefore,
$\vec{P}_{G}$ is symmetric and positive semi-definite. If the preconditioner
is applied as a left preconditioner, the result for nonsymmetric systems
also apply to symmetric systems. Assuming the system is symmetric,
the following proposition states that OPINS with the projected preconditioner
is equivalent to applying $\vec{Z}^{T}\vec{G}\vec{Z}$ as a preconditioner
in solving the corresponding null-space equation, regardless whether
it is applied as a left preconditioner or a symmetric preconditioner.
\begin{prop}
\label{prp:precond}Assume $\vec{Z}^{T}\vec{G}\vec{Z}$ is SPD and
the saddle-point system is symmetric. OPINS with the projected preconditioner
is equivalent to applying $\vec{Z}^{T}\vec{G}\vec{Z}$ as the preconditioner
for solving the null-space equation in the null-space method.\end{prop}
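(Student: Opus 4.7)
The plan is to reduce the symmetric case to the already-established nonsymmetric case (Proposition~\ref{prp:precond-1}) whenever the preconditioner is applied on the left, and then to handle the genuinely new situation in which a symmetric Krylov method (MINRES, CG on a transformed system, etc.) splits $\vec{P}_G$ as $\vec{L}\vec{L}^T$ and applies it symmetrically. For the first part, I would simply note that when $\vec{Z}^T\vec{G}\vec{Z}$ is SPD it is, in particular, nonsingular, so Proposition~\ref{prp:precond-1} immediately yields the equivalence between left-preconditioned OPINS and left-preconditioning the null-space equation with $\vec{Z}^T\vec{G}\vec{Z}$.

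The substantive step is the symmetric application. Since $\vec{Z}^T\vec{G}\vec{Z}$ is SPD, so is its inverse, and I would introduce a Cholesky-type factor $\vec{C}$ with $(\vec{Z}^T\vec{G}\vec{Z})^{-1} = \vec{C}\vec{C}^T$. Then $\vec{P}_G = \vec{Z}\vec{C}\vec{C}^T\vec{Z}^T = \vec{L}\vec{L}^T$ with $\vec{L} = \vec{Z}\vec{C}$. The symmetrically preconditioned PNS operator is $\vec{L}^T\vec{N}\vec{L}$, and using $\vec{N} = \vec{Z}\vec{Z}^T\vec{A}\vec{Z}\vec{Z}^T$ together with $\vec{Z}^T\vec{Z}=\vec{I}$, I would compute
\[
\vec{L}^T\vec{N}\vec{L} = \vec{C}^T\vec{Z}^T\bigl(\vec{Z}\vec{Z}^T\vec{A}\vec{Z}\vec{Z}^T\bigr)\vec{Z}\vec{C} = \vec{C}^T\bigl(\vec{Z}^T\vec{A}\vec{Z}\bigr)\vec{C},
\]
and similarly $\vec{L}^T\vec{\Pi}_{\vec{U}}^{\perp}(\vec{f}-\vec{A}\vec{x}_p) = \vec{C}^T\vec{Z}^T(\vec{f}-\vec{A}\vec{x}_p)$. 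These are precisely the coefficient matrix and right-hand side obtained by applying $\vec{C}$ as a symmetric split of the SPD preconditioner $(\vec{Z}^T\vec{G}\vec{Z})^{-1}$ to the null-space equation~(\ref{eq:NullSpaceEq}). Thus after a change of variables $\vec{v}=\vec{C}^{-1}\tilde{\vec{w}}$ on one side and $\vec{v}=\vec{Z}^T\vec{w}$ on the other, the two preconditioned Krylov iterations generate identical approximations.

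To complete the equivalence at the level of solution variables, I would verify that the inverse transformations agree: in the null-space method one recovers $\vec{x}_n = \vec{Z}\vec{v}$, while in OPINS one recovers $\vec{x}_n = \vec{\Pi}_{\vec{U}}^{\perp}\vec{w} = \vec{Z}\vec{Z}^T\vec{w}$. Combining with $\vec{w}=\vec{L}\tilde{\vec{w}} = \vec{Z}\vec{C}\tilde{\vec{w}}$ gives $\vec{Z}\vec{Z}^T\vec{w} = \vec{Z}\vec{C}\tilde{\vec{w}} = \vec{Z}\vec{v}$, so the two methods produce the same $\vec{x}_n$ at every iteration. I would conclude by observing that the argument is independent of the particular Cholesky factor $\vec{C}$, since Krylov subspaces generated by $(\vec{C}^T\hat{\vec{N}}\vec{C},\vec{C}^T\vec{r})$ map under $\vec{C}$ to the standard preconditioned Krylov subspace associated with $\vec{Z}^T\vec{G}\vec{Z}$ applied to $\hat{\vec{N}}$.

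The main obstacle I anticipate is a clean bookkeeping of the two change-of-variables ($\vec{w}\leftrightarrow\tilde{\vec{w}}$ in OPINS and $\vec{v}\leftrightarrow\tilde{\vec{v}}$ in the null-space method) so that the equivalence holds at every Krylov iterate and not merely at convergence; handling this carefully, together with explaining why the particular symmetric factor $\vec{L}=\vec{Z}\vec{C}$ need never be formed in practice, is what distinguishes this proposition from the purely left-preconditioned statement of Proposition~\ref{prp:precond-1}.
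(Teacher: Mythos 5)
Your proof is correct and follows essentially the same route as the paper's: it dispatches the left-preconditioned case via Proposition~\ref{prp:precond-1}, then handles the symmetric split by Cholesky-factoring $(\vec{Z}^{T}\vec{G}\vec{Z})^{-1}=\vec{C}\vec{C}^{T}$, taking $\vec{L}=\vec{Z}\vec{C}$, and simplifying $\vec{L}^{T}\vec{N}\vec{L}$ with $\vec{Z}^{T}\vec{Z}=\vec{I}$. Your extra bookkeeping verifying that the recovered $\vec{x}_n=\vec{\Pi}_{\vec{U}}^{\perp}\vec{w}$ coincides with $\vec{Z}\vec{v}$ is a welcome refinement over the paper's terse ending, but note the minor slip: the change of variables on the null-space side should read $\vec{v}=\vec{C}\tilde{\vec{w}}$, not $\vec{v}=\vec{C}^{-1}\tilde{\vec{w}}$, which is in fact what your subsequent display $\vec{Z}\vec{C}\tilde{\vec{w}}=\vec{Z}\vec{v}$ implies.
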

\begin{proof}
If $\vec{P}_{G}$ is applied as a left preconditioner, the proof is
the same as for nonsymmetric systems. Now consider $\vec{Z}^{T}\vec{G}\vec{Z}$
is SPD, then so is $\left(\vec{Z}^{T}\vec{G}\vec{Z}\right)^{-1}$,
which has a Cholesky factorization 
\[
\left(\vec{Z}^{T}\vec{G}\vec{Z}\right)^{-1}=\vec{L}_{G}\vec{L}_{G}^{T}.
\]
Then $\vec{P}_{G}=\vec{Z}\left(\vec{Z}^{T}\vec{G}\vec{Z}\right)^{-1}\vec{Z}^{T}$
has a symmetric factorization
\[
\vec{P}_{G}=\vec{L}\vec{L}^{T}=\vec{Z}\vec{L}_{G}(\vec{Z}\vec{L}_{G})^{T}=\vec{Z}\vec{L}_{G}\vec{L}_{G}^{T}\vec{Z}^{T},
\]
where $\vec{L}=\vec{Z}\vec{L}_{G}$. If $\vec{P}_{G}$ is applied
symmetrically, the preconditioned OPINS would solve the equation 
\[
\underbrace{\vec{L}_{G}^{T}\vec{Z}^{T}}_{\vec{L}^{T}}\underbrace{\vec{Z}\vec{Z}^{T}\vec{A}\vec{Z}\vec{Z}^{T}}_{\vec{N}}\underbrace{\vec{Z}\vec{L}_{G}}_{\vec{L}}\tilde{\vec{w}}=\underbrace{\vec{L}_{G}^{T}\vec{Z}^{T}}_{\vec{L}^{T}}\vec{Z}\vec{Z}^{T}\left(\vec{f}-\vec{A}\vec{x}_{p}\right),
\]
where $\vec{Z}^{T}\vec{Z}=\vec{I}$ and $\vec{w}=\vec{L}_{G}\tilde{\vec{w}}$.
Therefore, it is equivalent to solving 
\[
\vec{L}_{G}^{T}\underbrace{\vec{Z}^{T}\vec{A}\vec{Z}}_{\hat{\vec{N}}}\vec{L}_{G}\tilde{\vec{w}}=\vec{L}_{G}^{T}\vec{Z}^{T}\left(\vec{f}-\vec{A}\vec{x}_{p}\right),
\]
which is equivalent to applying $\vec{Z}^{T}\vec{G}\vec{Z}$ as a
symmetric preconditioner to (\ref{eq:NullSpaceEq}).
\end{proof}

For preconditioned Krylov subspace methods for symmetric systems,
$\vec{P}_{G}$ can be computed as an operator using Algorithm~\ref{alg:PK}.
Note that if the system is singular, applying the preconditioner symmetrically
may alter the null space of the coefficient matrix, unless it is equivalent
to applying a left preconditioner. Therefore, additional care must
be taken to find the minimum-norm solution when applying a preconditioner
symmetrically.

\section{\label{sec:Results}Numerical Results}

In this section, we evaluate OPINS with various test problems, including
both singular and nonsingular systems. The experiments are mainly
focused on symmetric systems. Some results of nonsymmetric systems
are also included in Section~\ref{sub:opins_precond}. We start by
evaluating the performance of OPINS with and without preconditioners,
to demonstrate the importance of preconditioners and the effectiveness
of the projected preconditioner. We then compare OPINS against some
present state-of-the-art methods for symmetric nonsingular and singular
systems. We use a few test problems, as summarized in Table~\ref{tab:test-matrices}
arising from constrained minimization, finite element analysis, climate
modeling, and random matrices. Among these problems, the first six
are sparse, and Figure~\ref{fig:Sparsity-patterns} shows the sparsity
patterns for some of their KKT matrices. If the right-hand sides were
unavailable, we generate them by multiplying the matrix with a random
vector. For all problems, we set the convergence tolerance to $10^{-10}$
for the residual in iterative methods, and set the tolerance for QRCP
to $10^{-12}$. 

\begin{table}
\protect\caption{\label{tab:test-matrices}Summary of test problems.}

\centering{}%
\begin{tabular}{|c|c|c|c|l|}
\hline 
problem & $\mbox{len}(\vec{x})$ & $\mbox{len}(\vec{y})$ & rank$(\vec{K})$ & source\tabularnewline
\hline 
\hline 
\textsf{3d-var} & 3240 & 3 & 3243 & Analysis for climate modeling \cite{Hthesis}\tabularnewline
\hline 
\textsf{sherman5} & 3312 & 20 & 3332 & nonsymmetric problem from \cite{Boisvert:1997:MMW:265834.265854}\tabularnewline
\hline 
\textsf{mosarqp1} & 2500 & 700 & 3200 & quadratic programming \cite{maros1999repository}\tabularnewline
\hline 
\textsf{fracture} & 780 & 92 & 786 & 2-D elasticity with fracture \cite{bathe1975finite}\tabularnewline
\hline 
\textsf{can\_61} & 61 & 20 & 81 & symmetric problem from \cite{Boisvert:1997:MMW:265834.265854}\tabularnewline
\hline 
\textsf{random} & 100 & 20 & 120 & random nonsingular matrix\tabularnewline
\hline 
\textsf{random-s} & 100 & 20 & 90 & random singular matrix\tabularnewline
\hline 
\end{tabular}
\end{table}

\begin{figure}
\begin{centering}
\begin{minipage}[t]{0.33\columnwidth}%
\begin{center}
\includegraphics[width=0.9\columnwidth]{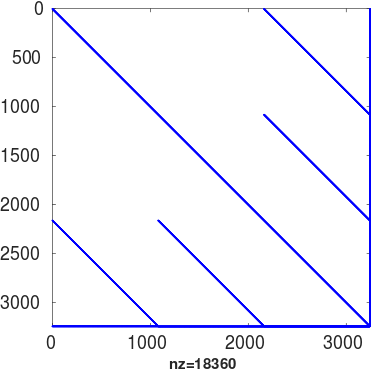}\\
{\small{}(a) }\textsf{3d-var}
\par\end{center}%
\end{minipage}%
\begin{minipage}[t]{0.33\columnwidth}%
\begin{center}
\includegraphics[width=0.9\columnwidth]{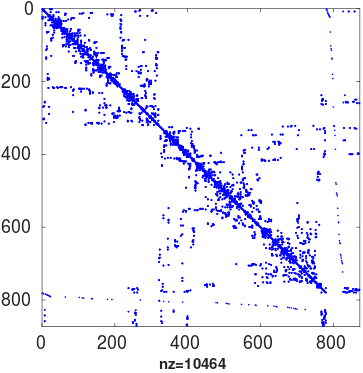}\\
{\small{}(b) }\textsf{fracture}
\par\end{center}%
\end{minipage}%
\begin{minipage}[t]{0.33\columnwidth}%
\begin{center}
\includegraphics[width=0.9\columnwidth]{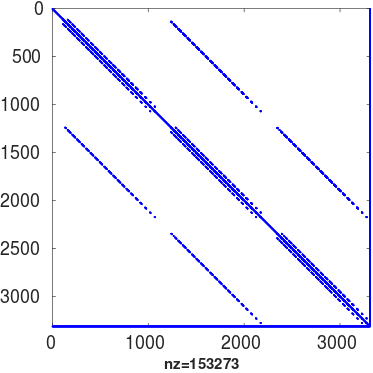}\\
{\small{}(c) }\textsf{sherman5}
\par\end{center}%
\end{minipage}
\par\end{centering}

\protect\caption{\label{fig:Sparsity-patterns}Sparsity patterns of KKT matrices in
\textsf{\small{}3d-var}, \textsf{fracture} and \textsf{sherman5}.}
\end{figure}

In terms of error measures, different methods use different convergence
criteria internally. For a direct comparison, we present the convergence
results in two difference measures. The first is the residual of $\vec{x}$
within the null space of $\vec{B}$, i.e., 
\[
\vec{r}:=\vec{\Pi}_{\vec{Z}}\left(\vec{f}-\vec{A}\vec{x}_{p}\right)-\vec{\Pi}_{\vec{Z}}\vec{A}\vec{x}_{n},
\]
where $\vec{x}_{p}$ is the particular solution in $\mbox{range}(\vec{B}^{T})$
and $\vec{x}_{n}$ is the corresponding solution in $\mbox{null}(\vec{B})$.
In the context of constraint minimization, this residual is an indicator
of how well $\nabla_{\vec{d}}\phi=\vec{0}$ is satisfied for the objective
function $\phi$ for all directions within the constraint hyperplane.
To make the metric scale independent, we measure the residual relative
to the right-hand side of (\ref{eq:NullSpaceEq}), i.e., 
\[
\mbox{relative residual in \ensuremath{\vec{x}}}:=\left\Vert \vec{r}\right\Vert /\left\Vert \vec{\Pi}_{\vec{Z}}\left(\vec{f}-\vec{A}\vec{x}_{p}\right)\right\Vert .
\]
When comparing OPINS with methods that solve for $\vec{x}$ and $\vec{y}$
simultaneously, such as preconditioned Krylov methods, we calculate
the residual in $\vec{x}$ by computing $\vec{U}$ using QR factorization.
For a more complete comparison, in addition to the above error metric,
we also compute the residual for the whole system (\ref{eq:KKT})
relative to the right-hand side in terms of both $\vec{x}$ and $\vec{y}$.

\subsection{\label{sub:opins_precond}Effectiveness of Preconditioners}

For Krylov subspace methods, the preconditioners have significant
impact on the convergence rate. First let us consider symmetric systems.
The core solver in OPINS is based on MINRES, so we assess the effectiveness
of OPINS with a straightforward preconditioner as well as the projected
preconditioner as described in Section~\ref{sec:Preconditioners}.
For simplicity, we choose $\vec{G}$ as the Jacobi preconditioner
for both preconditioners, and denote them as OPINS-J and OPINS-P,
respectively. 

We solve the test case \textsf{3d-var} with unpreconditioned OPINS,
OPINS-J and OPINS-P. Figure~\ref{fig:opins_cmp_mosarqp}(a) shows
the convergence results measured in terms of the $\vec{x}$ residual.
For \textsf{3d-var}, the block $\vec{A}$ is nearly diagonal and is
strongly diagonal dominant, so both preconditioners worked well and
performed significantly better than unpreconditioned OPINS. Between
the preconditioners, the projected preconditioner performed better
than the Jacobi preconditioner for MINRES alone, because the projected
preconditioner provides a better approximation to the whole matrix.
This indicates that the projected preconditioner is effective for
accelerating OPINS, if $\vec{G}$ is a good approximation of $\vec{A}$. 

To demonstrate the applicability to nonsymmetric systems, we solve
the problem \textsf{sherman5} from the matrix market database \cite{Boisvert:1997:MMW:265834.265854}.
$\vec{A}$ comes from an oil reservoir simulation and $\vec{B}$ is
a random matrix. Since the system is nonsymmetric, we choose $\vec{G}$
as the ILU factorization of $\vec{A}$ for both preconditioners, and
denote the two strategies as OPINS-ILU and OPINS-P, respectively.
The inner solver is GMRES with the number of restarts set to 50. Figure~\ref{fig:opins_cmp_mosarqp}(b)
shows the convergence results for the three approaches. The results
show that OPINS indeed works for nonsymmetric systems. Similar to
the symmetric case, OPINS-P is faster than OPINS-ILU while both are
much faster than unpreconditioned OPINS.\textsf{ }

\begin{figure}
\begin{centering}
\begin{minipage}[t]{0.49\columnwidth}%
\begin{center}
\includegraphics[width=1\columnwidth]{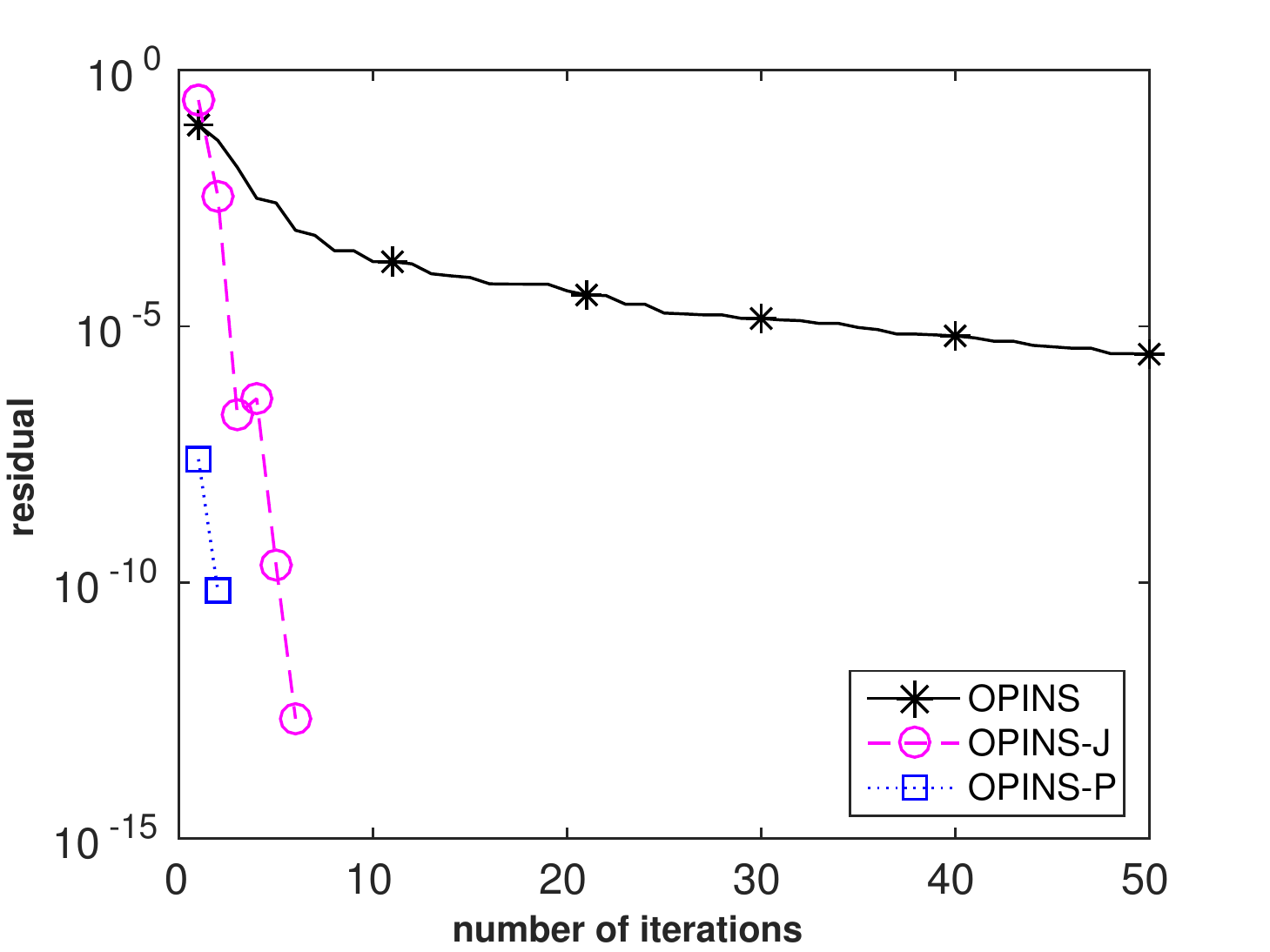}\\
{\small{}(a) }\textsf{3d-var}
\par\end{center}%
\end{minipage}%
\begin{minipage}[t]{0.49\columnwidth}%
\begin{center}
\includegraphics[width=1\columnwidth]{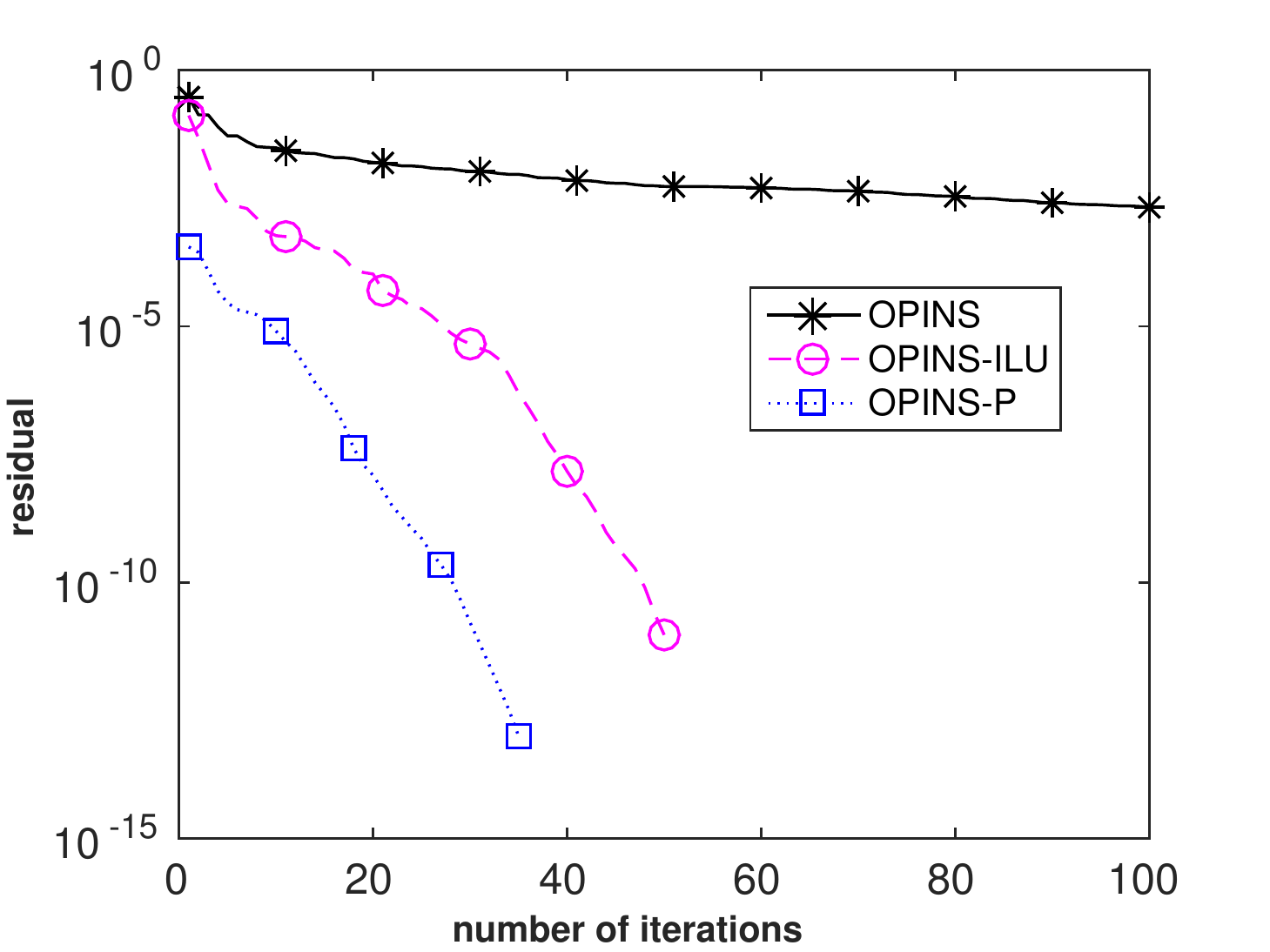}\\
{\small{}(b)}\textsf{ sherman5}
\par\end{center}%
\end{minipage}
\par\end{centering}

\protect\caption{\label{fig:opins_cmp_mosarqp} Convergence history of relative residual
in $\vec{x}$ of OPINS versus preconditioned OPINS for\textsf{ 3d-var
}and\textsf{ sherman5.}}
\end{figure}

\subsection{\label{sub:Nonsingular}Assessment for Symmetric Nonsingular Systems}

We now perform a more in-depth assessment of OPINS for symmetric nonsingular
systems. Since unpreconditioned OPINS is not effective for nonsingular
systems, we only consider OPINS-J and OPINS-P. We compare them against
other Krylov subspace methods, including PMINRES and PCG with constraint
preconditioners, which are equivalent to implicit null-space methods
and are effective choices for the saddle-point systems that we are
considering. As a reference, we also consider the preconditioned GMRES
with the constraint preconditioner applied to the original system
(\ref{eq:KKT}). The number of restarts is set to 50. We calculated
the error for GMRES only after the solver has converged, so only one
data point is available for it.

We solve three test problems, \textsf{mosarqp1,} \textsf{3d-var and
random.} Figure~\ref{fig:overall_cmp}(a) and (b) show the convergence
of the residual in terms of $\vec{x}$ for \textsf{3d-var }and \textsf{random}.
In \textsf{3d-var}, all methods perform well with OPINS-P being similar
to PMINRES. In terms of solution accuracy, all strategies give accurate
$[\vec{x},\vec{y}]$ as shown in Table~\ref{tab:whole_res_ns}. 

\begin{figure}
\begin{centering}
\begin{minipage}[t]{0.5\columnwidth}%
\begin{center}
\includegraphics[width=1\columnwidth]{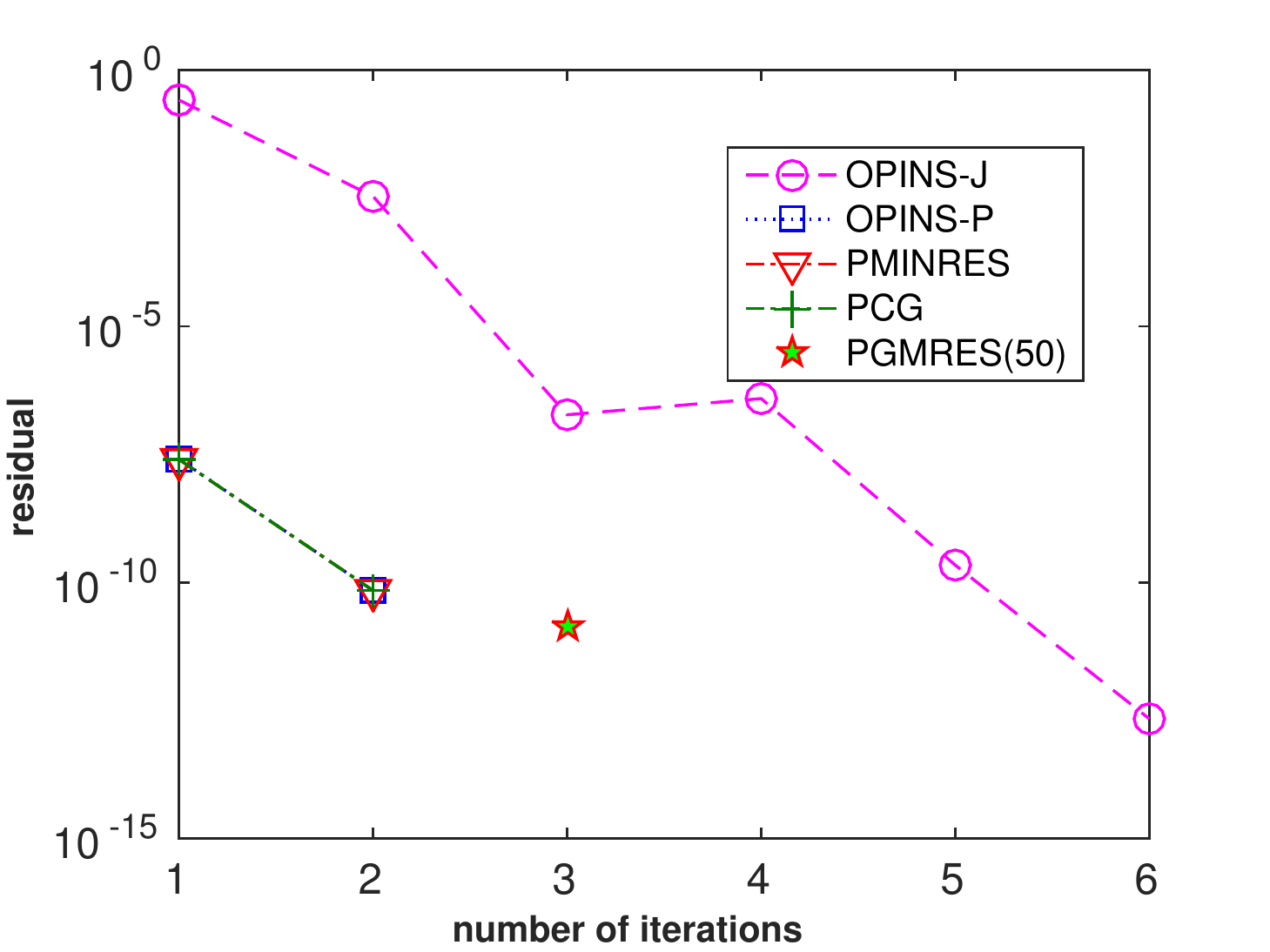}\\
{\small{}(a) }\textsf{3d-var}{\small{}.}
\par\end{center}%
\end{minipage}%
\begin{minipage}[t]{0.5\columnwidth}%
\begin{center}
\includegraphics[width=1\columnwidth]{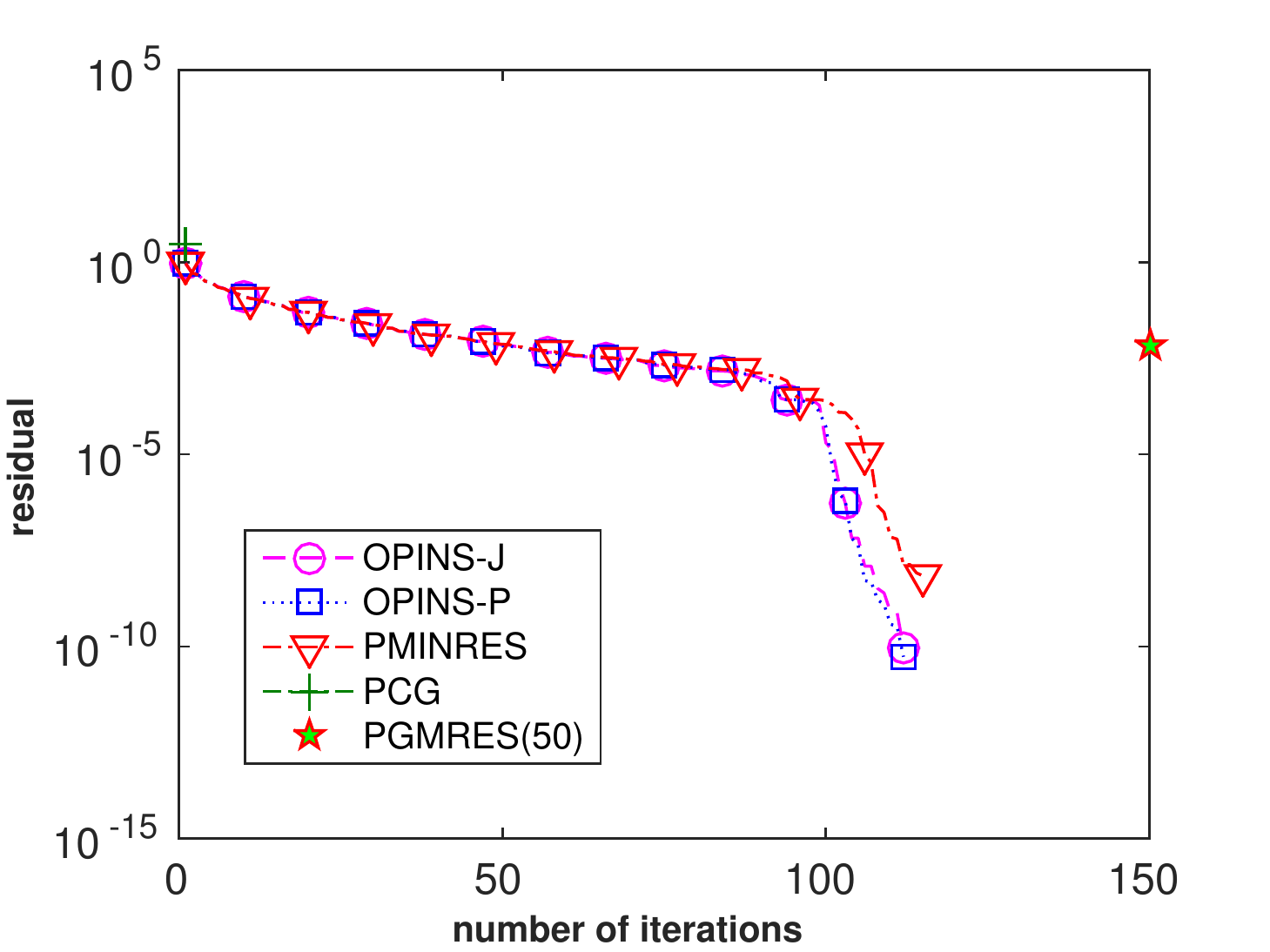}\\
{\small{}(b) }\textsf{\small{}random}{\small{}.}
\par\end{center}%
\end{minipage}
\par\end{centering}

\protect\caption{\label{fig:overall_cmp} Convergence history of relative residuals
in $\vec{x}$ for OPINS and projected Krylov methods for \textsf{3d-var}
and \textsf{random}.}
\end{figure}

\begin{table}
\protect\caption{\label{tab:whole_res_ns}Relative residual in $[\vec{x},\vec{y}]$
upon convergence for OPINS and convergence or stagnation of projected
Krylov methods.}

\centering{}%
\begin{tabular}{|c|c|c|c|c|c|}
\hline 
Problem & OPINS-J & OPINS-P & PMINRES & PCG & PGMRES(50)\tabularnewline
\hline 
\hline 
\textsf{3d-var} & 7.8e-13 & 2.5e-10 & 2.5e-10 & 2.5e-10 & 1.3e-11\tabularnewline
\hline 
\textsf{mosarqp1} & 2.1e-11 & 3.9e-11 & 1.3e-10 & 1.3e-10 & 5.4e-11\tabularnewline
\hline 
\textsf{random} & 1.2e-12 & 1.2e-12 & 3.5e-4 & break & \footnotemark[1]1.5e-4\tabularnewline
\hline 
\end{tabular}
\end{table}

In the \textsf{random} test, the $\vec{A}$ block is a $100\times100$
symmetric, nonsingular and indefinite matrix. $\vec{B}$ is a randomly
generated $20\times100$ dense matrix with full rank. Figure~\ref{fig:overall_cmp}(b)
shows the $\vec{x}$ errors of different strategies. In this example,
$\vec{A}$ is no longer positive definite, and PCG breaks at the initial
iterations. Compared to PGMRES, OPINS and PMINRES converged much faster
both in terms of $\vec{x}$ and in terms of $[\vec{x},\vec{y}]$.
One reason is that OPINS and PMINRES are symmetric methods that can
take advantage of a complete Krylov subspace basis. On the other hand,
the $\vec{x}$ residual of PMINRES will stagnate around $10^{-8}$.
This is due to the instability of the algorithm as mentioned in \cite{gould2014projected};
see Subsection~\ref{sub:Stability} for a more detailed analysis
and comparison. Due to its stability and faster convergence, OPINS
delivered the most accurate solution among all the methods both in
terms of $\vec{x}$ and in terms of $[\vec{x},\vec{y}]$.

\footnotetext[1]{This is the error for PGMRES after 150 iterations.}

\subsubsection{\label{sub:Stability}Stability of OPINS versus PMINRES}

In this section, we study the stability of OPINS and PMINRES. The
difference between PCG and PMINRES with a constraint preconditioner
mainly lies in the underlying solver. Since MINRES is more robust
than CG, we restrict our attention to PMINRES. In our previous discussion,
PMINRES and OPINS with the projected preconditioner are both equivalent
to a preconditioned null-space method. However PMINRES may stagnate
for some problems, as shown in Figure~\ref{fig:overall_cmp} (b). 

To further illustrate this, we consider the \textsf{random} test used
in Subsection~\ref{sub:Nonsingular}. OPINS with the two preconditioners
are applied here. PMINRES uses the constraint preconditioner with
$\vec{G}$ chosen as the diagonal part of $\vec{A}$. To examine the
stability of PMINRES, we consider PMINRES+IR(0) and PMINRES+IR(1),
which denote no iterative refinement and one step of iterative refinement,
respectively. The constraint preconditioner is solved by factorization.
Figure~\ref{fig:comparison2}(a) shows the convergence of various
strategies. It can be observed that OPINS-J, OPINS-P and PMINRES+IR(1)
have similar convergence behaviors. On the other hand, PMINRES failed
to converge to the desired tolerance if no iterative refinement is
applied. Another example is the fracture problem. This system is singular,
but the projected MINRES is also applicable. We set $\vec{G}$ to
be the identity matrix in the constraint preconditioner. No preconditioner
is used in OPINS. From Figure~\ref{fig:comparison2}(b), we can see
that OPINS is similar to the more stable PMINRES+IR(1), while the
residual of PMINRES+IR(0) oscillates.

\begin{figure}
\begin{centering}
\begin{minipage}[t]{0.5\columnwidth}%
\begin{center}
\includegraphics[width=1\columnwidth]{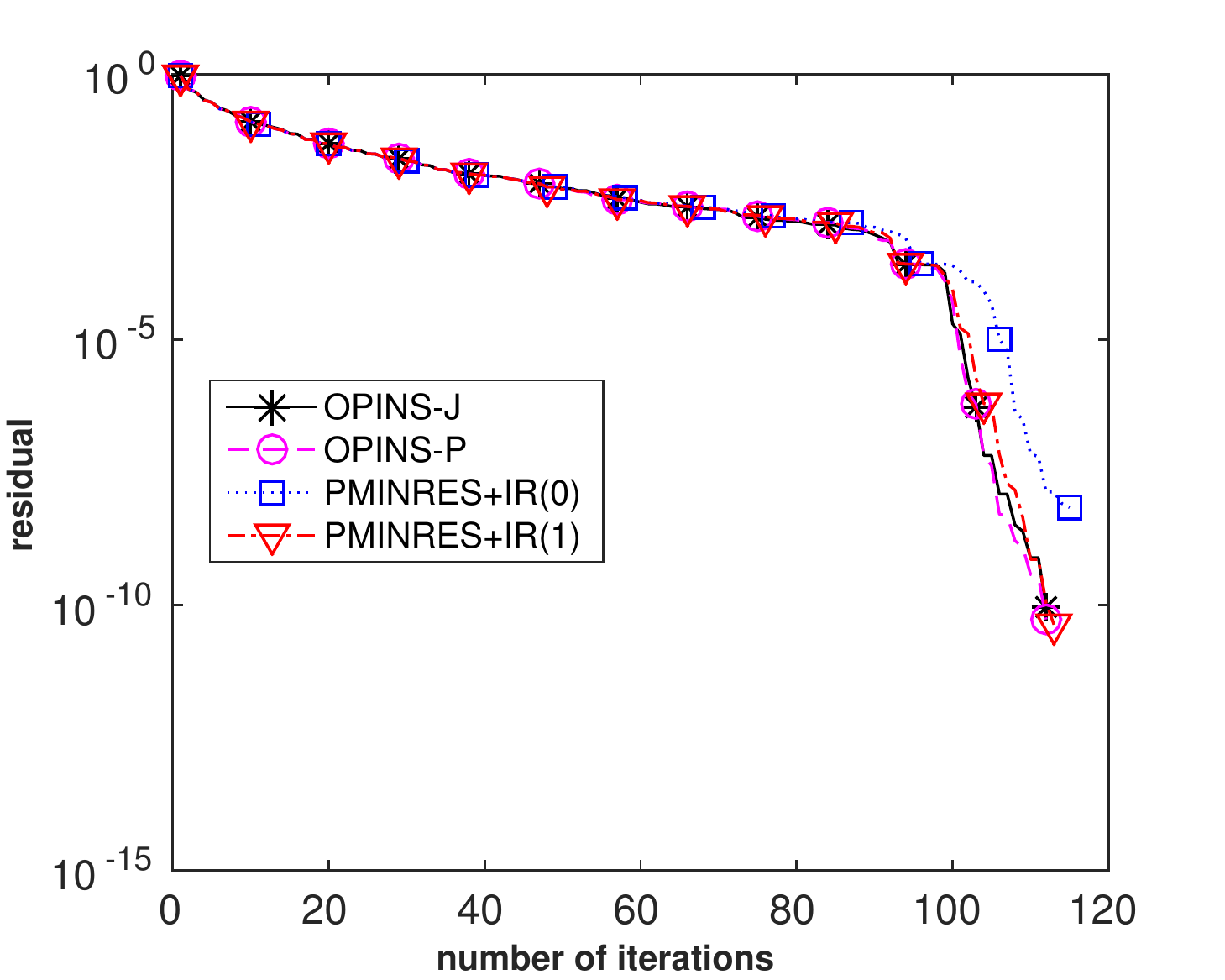}\\
{\small{}(a) Relative residuals in $\vec{x}$ for }\textsf{\small{}random}{\small{}.}
\par\end{center}%
\end{minipage}%
\begin{minipage}[t]{0.5\columnwidth}%
\begin{center}
\includegraphics[width=1\columnwidth]{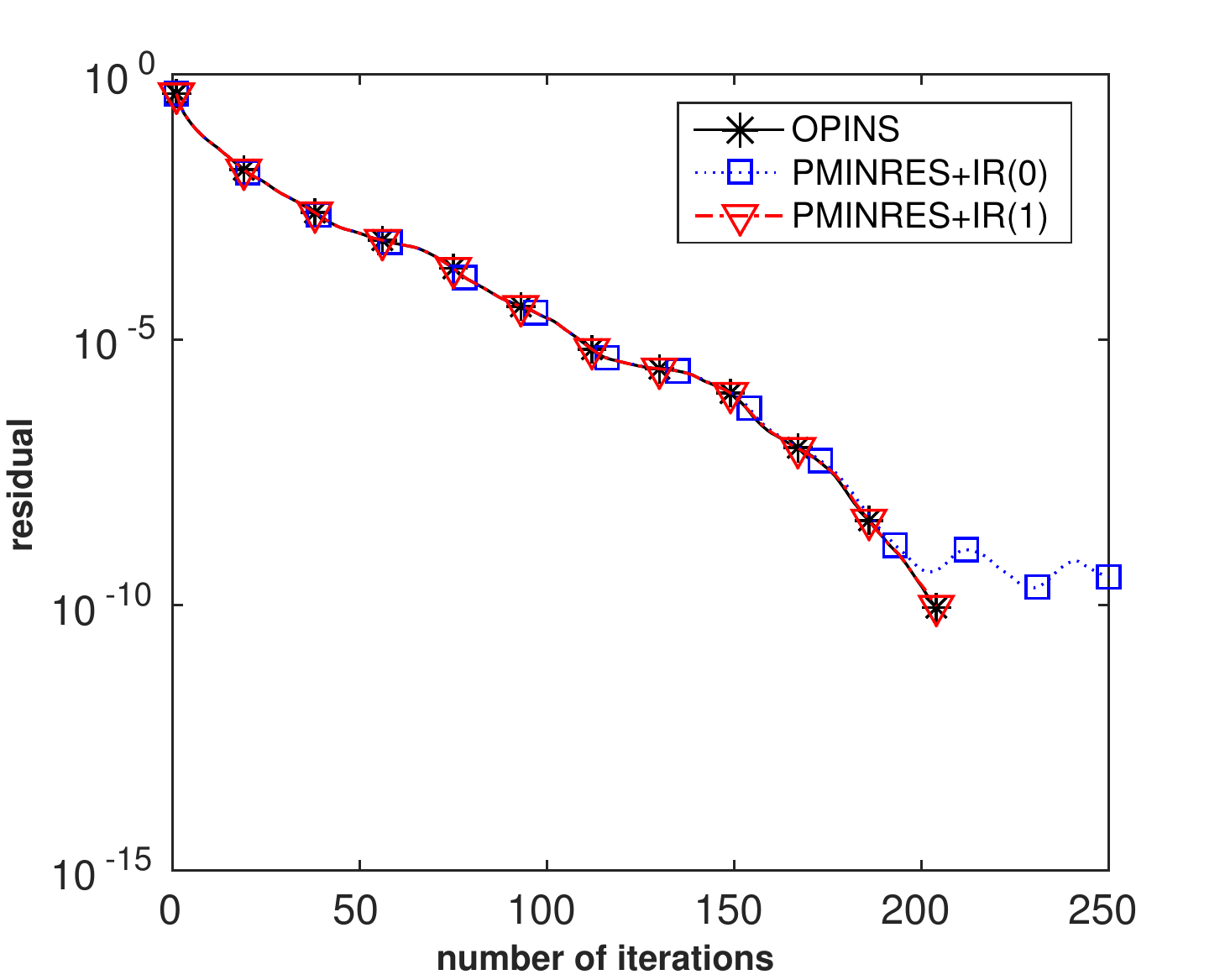}\\
{\small{}(b) Relative residuals in $\vec{x}$ for }\textsf{\small{}fracture}{\small{}.}
\par\end{center}%
\end{minipage}
\par\end{centering}

\protect\caption{\label{fig:comparison2} Comparison of OPINS and projected MINRES
with/without iterative refinement for \textsf{random} and \textsf{fracture}.}
\end{figure}

In addition, OPINS is more stable compared to PMINRES with iterative
refinement, because the latter may still stagnate. We consider the
example \textsf{can\_61} from the matrix market database \cite{Boisvert:1997:MMW:265834.265854}.
The matrix is set as the $\vec{A}$ part of the saddle-point system.
$\vec{B}$ is a randomly generated $20\times61$ dense matrix with
full rank. In this example, we include an additional strategy, PMINRES+IR(2)
which uses two steps of iterative refinement per iteration. Figure~\ref{fig:comparison3}(a)
displays the residuals of the five strategies. Both OPINS-J and OPINS-P
converged to the specified accuracy. However, PMINRES+IR(0) and PMINRES+IR(1)
both stagnated early. Even additional iterative refinement could not
increase the accuracy further as PMINRES+IR(2) still stagnated. 

One reason for the stagnation of PMINRES is that $\vec{x}_{n}$ in
the constraint preconditioner may deviate from $\mbox{null}\left(\vec{B}\right)$
during the iteration. Eventually the large residual in $\vec{B}\vec{x}_{n}$
may cause MINRES to stagnate \cite{gould2014projected}. This is evident
in Figure~\ref{fig:comparison3}(b), where we plot $\Vert\vec{B}\vec{x}_{n}\Vert$
and $\left\Vert \vec{B}\vec{\Pi}_{\vec{U}}^{\perp}\vec{w}\right\Vert $
for PMINRES and OPINS. For PMINRES with and without iterative refinement,
$\Vert\vec{B}\vec{x}_{n}\Vert$ grew, and the growth of non null-space
component was significant even with two steps of iterative refinement.
In contrast, this norm stayed at close to machine precision for OPINS
throughout the computation, because OPINS explicitly projects the
solution onto $\mbox{null}\left(\vec{B}\right)$. For this reason,
OPINS is more stable than PMINRES.

\begin{figure}
\begin{centering}
\begin{minipage}[t]{0.5\columnwidth}%
\begin{center}
\includegraphics[width=1\columnwidth]{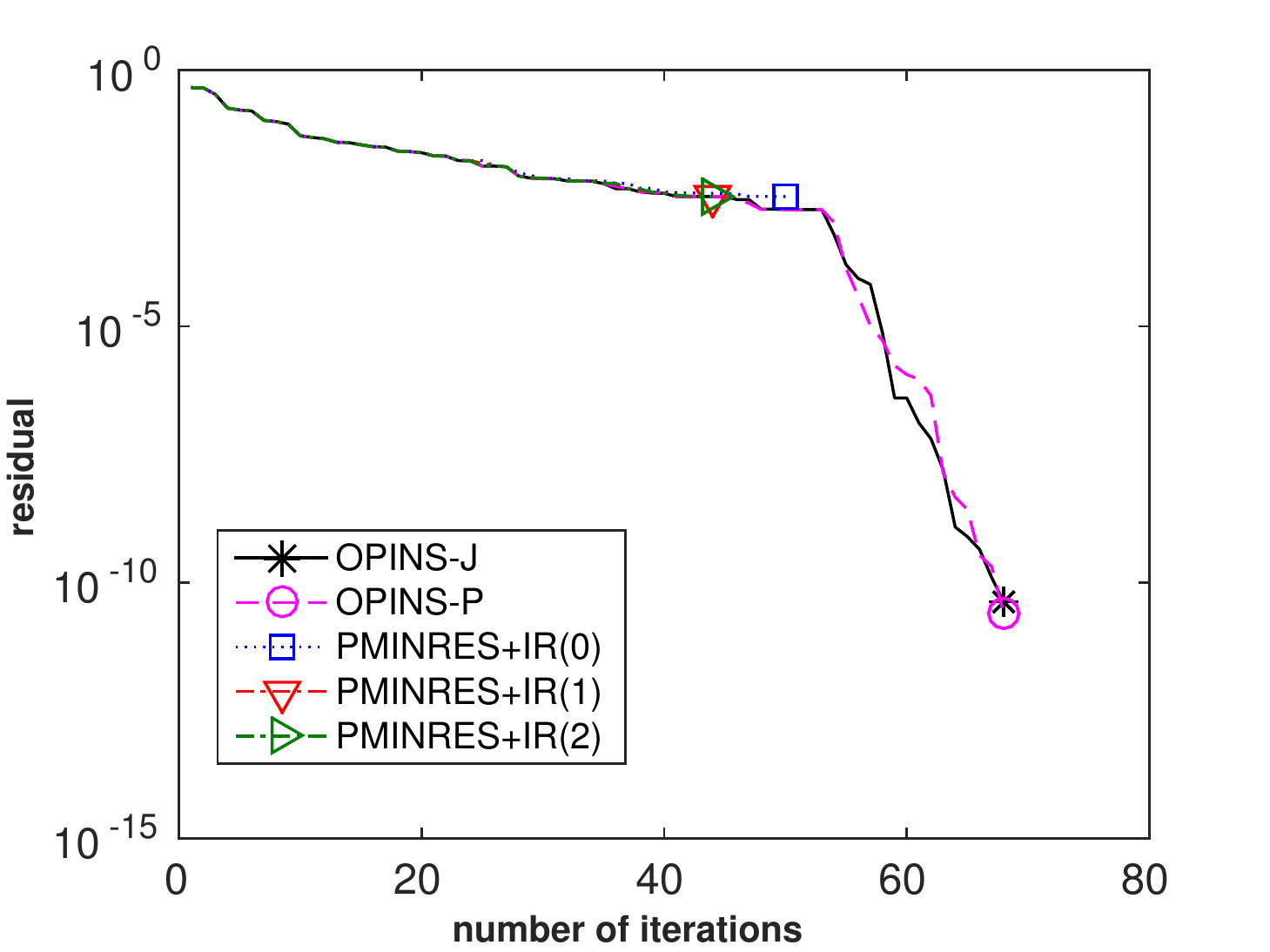}\\
{\small{}(a) Convergence history of relative residual in $\vec{x}$.}
\par\end{center}%
\end{minipage}%
\begin{minipage}[t]{0.5\columnwidth}%
\begin{center}
\includegraphics[width=1\columnwidth]{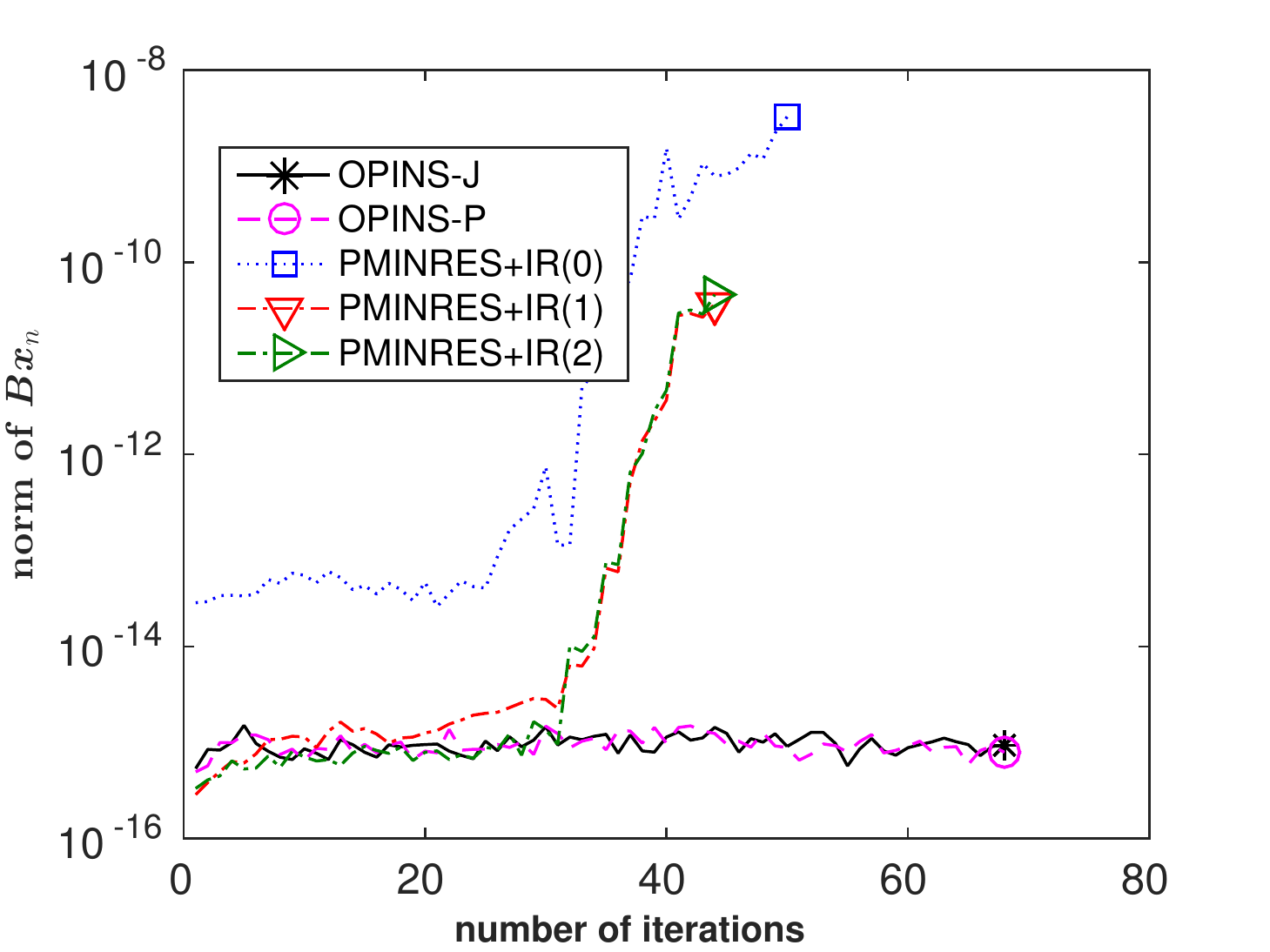}\\
{\small{}(b) History of $\Vert\vec{B}\vec{x}_{n}\Vert$ in OPINS and
PMINRES.}
\par\end{center}%
\end{minipage}
\par\end{centering}

\protect\caption{\label{fig:comparison3} Comparison of OPINS and projected MINRES
with/without iterative refinement for \textsf{can\_61}, for which
iterative refinement could not improve accuracy for projected MINRES
due to loss of orthogonality.}
\end{figure}

\subsection{\label{sub:Singular}Assessment for Symmetric Singular Systems}

When solving singular systems, no preconditioners are used to avoid
altering the minimum-norm solution. The methods for comparison are
CG, MINRES and GMRES with the same constraint preconditioner. For
GMRES, we calculate the error after the solver has converged.

In this subsection, we consider some test cases with singular systems.
The test problem \textsf{fracture}, comes from a finite element analysis
of nonlinear elasticity model with cracks. In this system, $\vec{A}$
represents the global stiffness matrix. The constraint matrix $\vec{B}$
is used to enforce various boundary conditions, such as sliding boundary
conditions and contact constraints. Due to the cracks opening along
edges, some pieces are isolated completely from the main body. This
introduces additional singularity to $\vec{A}$ such that $\mbox{null}\left(\vec{A}\right)\cap\mbox{null}\left(\vec{B}\right)\neq\left\{ 0\right\} $.
In addition, $\vec{B}$ contains redundant constraints, and it is
rank-deficient. We apply OPINS without preconditioners to solve the
problem. For PMINRES, PCG and PGMRES, $\vec{G}$ is set to be the
identity in the constraint preconditioner. Since $\vec{B}$ is rank-deficient,
an extra step of QR factorization is used to remove linearly dependent
rows.

\subsubsection{Convergence Comparison}

Figure~\ref{fig:fracture} shows the convergence of strategies. It
can be seen that OPINS converges monotonically to the desired accuracy
while the errors of PMINRES and PCG oscillate. In particular, the
error of PCG starts to grow after a certain number of iterations.
This shows that CG is not as robust as MINRES for solving singular
systems. In terms of the overall residual, OPINS has around $10^{-9}$
while the residuals of PCG and PMINRES stay around $1$. The reason
why PCG and PMINRES have large overall errors is that they do not
necessarily minimize the error of $[\vec{x},\vec{y}]$. To get a more
accurate $\vec{y}$, we need to solve the least-squares problem $\vec{B}^{T}\vec{y}=\vec{f}-\vec{A}\vec{x}$
once convergence in $\vec{x}$ is reached. On the other hand, the
residual of GMRES is about $10^{-6}$. However the converged solution
is incorrect, since $\Vert\vec{g}-\vec{B}\vec{x}\Vert$ is around
$1$. Overall OPINS is more stable and accurate than the other approaches.

\begin{figure}
\begin{centering}
\begin{minipage}[t]{0.5\columnwidth}%
\begin{center}
\includegraphics[width=1\columnwidth]{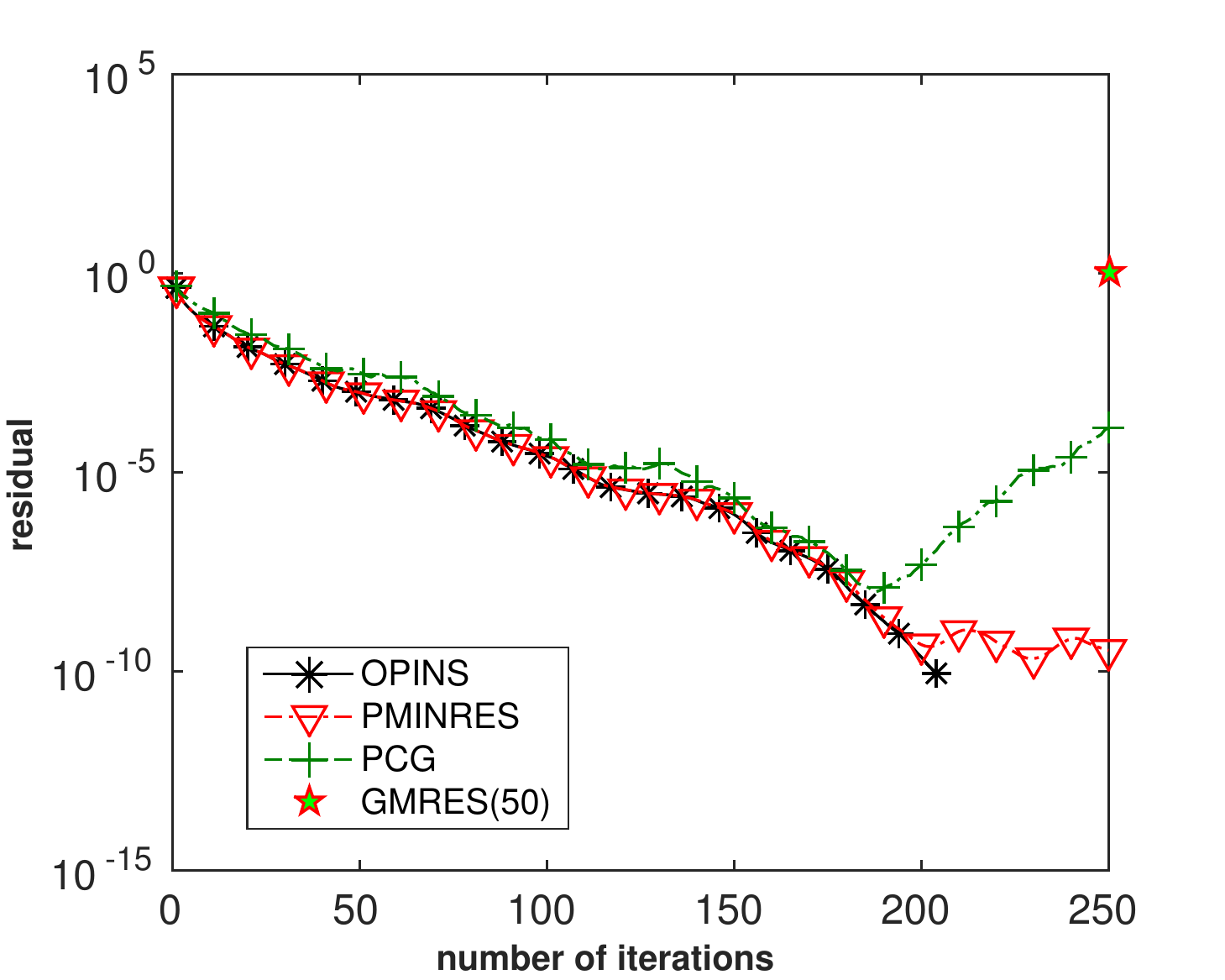}
\par\end{center}%
\end{minipage}
\par\end{centering}

\protect\caption{\label{fig:fracture} Convergence history of relative residual in
{\small{}$x$} for OPINS and Krylov methods for singular system from
\textsf{fracture}.}
\end{figure}

\subsubsection{Solution Norm}

To illustrate the minimum norm property of OPINS, we compare OPINS
with truncated SVD. The $\vec{x}$ and $\vec{y}$ components are measured
by 2-norm. The relative 2-norm of the residuals are provided as references.
In the fracture system, large Lamè parameters are used in calculating
element stresses. As a result, there is a $10^{10}$ gap between the
scaling of $\vec{A}$ and $\vec{B}$ which makes the system very ill-conditioned.
The conditioning will be improved if we scale $\vec{A}$ and $\vec{f}$
by $10^{-10}$. After the operation, $\vec{x}$ should remain unchanged
while $\vec{y}$ will be scaled by $10^{-10}$ accordingly. We apply
OPINS and truncated SVD (TSVD) to both scaled and unscaled systems,
where the tolerance for TSVD was $10^{-12}$. As shown in Table~\ref{tab:solution_norms},
OPINS is stable with and without scaling. Its $\vec{x}$ component
stays unchanged, and it has the same norm as the TSVD solution on
the scaled system. Without scaling, TSVD produced a wrong solution
that is far away from the constraint hyperplane. This is evident from
Table~\ref{tab:solution_errors}, where $\Vert\vec{g}-\vec{B}\vec{x}\Vert$
is very large for TSVD. In contrast, OPINS finds the minimum-norm
$\vec{x}$ independently of the scaling. Therefore, it is advantageous
to solve $\vec{x}$ and $\vec{y}$ separately over solving them together,
and it is not advisable to use any linear solver, including TSVD or
GMRES, as black box solvers for saddle-point problems.

In terms of the $\vec{y}$ component, the solution of OPINS has a
slightly larger norm than the TSVD solution for the scaled system.
This is due to the rank deficiency of $\vec{B}$. For the case where
$\vec{B}$ has full rank, we consider a random system. $\vec{A}$
is a $100\times100$ dense and semi-definite matrix with $\mbox{rank}=50$.
$\vec{B}$ is a randomly generated $20\times100$ dense matrix with
full rank. By construction, the system is singular with rank equal
to 90. Since $\vec{A}$ is semi-definite and $\vec{B}$ has full rank,
we can expect both $\vec{x}$ and $\vec{y}$ components of the OPINS
solution to have minimum norms, which is evident in Table~\ref{tab:solution_norms}.

\begin{table}
\protect\caption{\label{tab:solution_norms}Norms of solutions of OPINS versus truncated
SVD.}

\centering{}%
\begin{tabular}{|c|c|c|c|c|}
\hline 
\multirow{2}{*}{Test Problems} & \multicolumn{2}{c|}{OPINS} & \multicolumn{2}{c|}{TSVD}\tabularnewline
\cline{2-5} 
 & $\Vert\vec{x}\Vert$ & $\Vert\vec{y}\Vert$ & $\Vert\vec{x}\Vert$ & $\Vert\vec{y}\Vert$\tabularnewline
\hline 
\hline 
\textsf{fracture} & $1.53\times10^{-4}$ & $1.79\times10^{7}$ & $2.86\times10^{-5}$ & $4.25\times10^{-11}$\tabularnewline
\hline 
\textsf{scaled fracture} & $1.53\times10^{-4}$ & $1.79\times10^{-3}$ & $1.53\times10^{-4}$ & $1.54\times10^{-3}$\tabularnewline
\hline 
\textsf{random-s} & $5.56$ & $2.41$ & $5.56$ & $2.41$\tabularnewline
\hline 
\end{tabular}
\end{table}

\begin{table}
\protect\caption{\label{tab:solution_errors}Errors of solutions of OPINS versus truncated
SVD.}

\centering{}%
\begin{tabular}{|c|c|c|c|c|}
\hline 
\multirow{2}{*}{Test Problems} & \multicolumn{2}{c|}{OPINS} & \multicolumn{2}{c|}{TSVD}\tabularnewline
\cline{2-5} 
 & $\Vert\vec{g}-\vec{B}\vec{x}\Vert$ & $\Vert\vec{r}\Vert$ & $\Vert\vec{g}-\vec{B}\vec{x}\Vert$ & $\Vert\vec{r}\Vert$\tabularnewline
\hline 
\textsf{fracture} & $1.0\times10^{-17}$ & $1.2\times10^{-9}$ & $1.1$ & $1.9\times10^{-10}$\tabularnewline
\hline 
\textsf{scaled fracture} & $8.6\times10^{-18}$ & $5.6\times10^{-10}$ & $1.1\times10^{-14}$ & $1.3\times10^{-13}$\tabularnewline
\hline 
\textsf{random-s} & $3.7\times10^{-16}$ & $2.1\times10^{-13}$ & $2.3\times10^{-14}$ & $4.6\times10^{-15}$\tabularnewline
\hline 
\end{tabular}
\end{table}

\section{\label{sec:Conclusions}Conclusions}

In this paper, we introduced a new implicit null-space method, called
OPINS, for solving saddle-point systems, especially for applications
with relatively few constraints compared to the number of solution
variables. These systems may be nonsingular, or be singular but compatible
in terms of $\vec{f}$. Instead of finding the null space of the constraint
matrix explicitly, OPINS uses an orthonormal basis of its orthogonal
complementary subspace to reduce the saddle-point system to a singular
but compatible system. We showed that OPINS is equivalent to a null-space
method with an orthonormal basis for nonsingular systems. In addition,
it can solve singular systems and produce the minimum-norm solution,
which is desirable for many applications. Because of its use of orthogonal
projections, OPINS is more stable than other implicit null-space methods,
such as the projected Krylov methods. Despite its core equation is
singular, its nonzero eigenvalues have the same distribution as that
of the null-space method with an orthonormal basis, so an iterative
solver would converge at the same rate. We proposed effective preconditioners
for OPINS, based on the work for projected Krylov methods. For singular
saddle-point problems, one should only use left preconditioners that
do not alter the null space of the coefficient matrix, to ensure the
minimum-norm solution. A future research direction is to develop more
effective preconditioners for singular saddle-point systems. Another
direction is the parallelization of OPINS, which will require a parallel
rank-revealing QR factorization, such as that in \cite{davis2011algorithm},
as well as parallel KSP solvers, such as those available in PETSc
\cite{petsc-user-ref}.

\bibliographystyle{siam}
\bibliography{refs/refs,KKT}

\end{document}